%
%
%
%
%
%
%
%

\documentclass[a4paper,12pt]{article}     

\usepackage{graphicx}		  
\usepackage{amsmath,amssymb}	

\setlength{\topmargin}{-1cm}
\setlength{\textheight}{230mm}

\newtheorem{lemma}{Lemma}[section]
\newtheorem{theorem}[lemma]{Theorem}

\newtheorem{assumption}{Assumption}

\newcommand\LABEL[1]{\label{#1}}

\newenvironment{proof}{
\hspace*{-9mm}
{ \it Proof.}}
{\hfill {$\square$}\vspace{1.5em}}

\def\authorfont{\footnotesize}

\def\ccode#1{\par		
	\vspace*{8pt}
	{\authorfont{\leftskip18pt\rightskip\leftskip
	\noindent #1\par}}\par}

\begin{document}

\begin{center}
{\Large
\bf Minimal Charts of Type $(3,3)$}
\vspace{10pt}
\\ 
{\sc Teruo NAGASE and Akiko SHIMA}
\end{center}

\vspace{10pt}

\begin{abstract}
Let $\Gamma$ be a chart.
For each label $m$, we denote by $\Gamma_m$
the subgraph of $\Gamma$
consisting of all the edges of label $m$ and their vertices. Let $\Gamma$ be a minimal chart of type $(m;3,3)$.
That is, a minimal chart $\Gamma$ has six white vertices,
and both of $\Gamma_m\cap\Gamma_{m+1}$ and $\Gamma_{m+1}\cap\Gamma_{m+2}$ consist of three white vertices.
Then $\Gamma$ is C-move equivalent to a minimal chart
containing a ``subchart'' representing a 2-twist spun trefoil or its ``reflection''.
\end{abstract}

\ccode{2010 Mathematics Subject Classification. Primary 57Q45; Secondary 57Q35.}
\ccode{ {\it Key Words and Phrases}. 2-knot, chart, white vertex. }

\setcounter{section}{0}
\section{{\large Introduction}}


\baselineskip 14pt

Charts are oriented and labeled graphs
in a disk 
with three kinds of vertices
called black vertices, crossings,
and white vertices 
(see Section~\ref{s:Prel} for the precise definition of charts).
Charts correspond to surface braids 
(see \cite[Chapter 14]{BraidBook} for the definition of surface braids).
The closures of surface braids are  oriented closed surfaces embedded in 4-space ${\Bbb R}^4$
 (see \cite[Chapter 23]{BraidBook}). 
A {\it C-move} 
is a local modification between two charts
in a disk.
A C-move induces 
an ambient isotopy between the closures
of the corresponding two surface braids.
Two charts are said to be {\it C-move equivalent} 
if there exists
a finite sequence of C-moves 
which modifies one of the two charts 
to the other.
We will work in the PL category or smooth category. 
All submanifolds are assumed to be locally flat.

A chart is called a {\it ribbon chart}
if the chart is C-move equivalent to 
a chart without white vertices \cite{BraidThree}.
The 4-chart as shown in Fig.~\ref{figTrefoil}(a) 
represents a 2-twist spun trefoil.
It is well known that 
the 2-knot is not a ribbon 2-knot.  
On the other hand, Hasegawa showed that 
if a non-ribbon chart representing a 2-knot 
is minimal, 
then the chart must possess at least six white vertices \cite{H1} 
where a {\it minimal} chart $\Gamma$ means 
its complexity $(w(\Gamma), -f(\Gamma))$ 
is minimal among 
the charts C-move equivalent to 
the chart $\Gamma$ with respect to 
the lexicographic order 
of pairs of integers, 
$w(\Gamma)$ is the number of white vertices 
in $\Gamma$, and 
$f(\Gamma)$ is the number of free edges 
in $\Gamma$.
Nagase, Ochiai, and Shima showed that
there does not exist a minimal chart 
with exactly five white vertices \cite{ONS}.
Nagase and Shima show that
there does not exist a minimal chart
with exactly seven white vertices \cite{ChartApp1}, \cite{ChartAppII},\cite{ChartAppIII}, \cite{ChartApp4}.
Ishida, Nagase, and Shima showed that
any minimal chart with exactly four white vertices
 is C-move equivalent to a chart in two kinds of classes \cite{INS}.

\begin{figure}[ht]
\begin{center}
\includegraphics{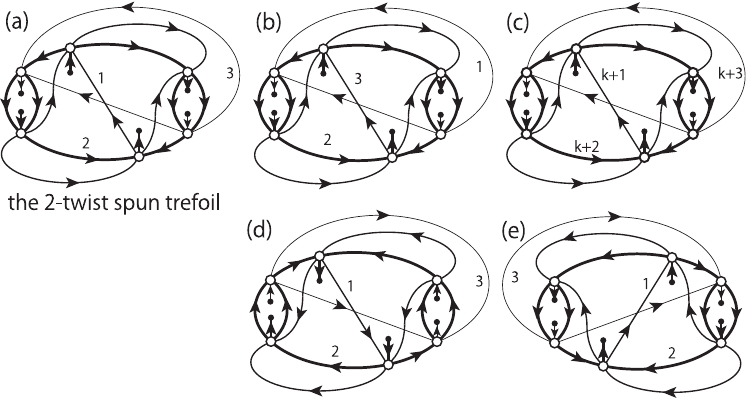}
\end{center}
\caption{\label{figTrefoil} Charts contained in the lor-family of the 4-chart describing the 2-twist spun trefoil. Here $k$ is 
a positive integer.}
\end{figure}

Two charts are said to be 
{\it lor-equivalent}
(Label-Orientation-Reflection equivalent)
provided that
one of the charts is obtained from
the other by a finite sequence of 
the following five modifications:
\begin{enumerate}
\item[(i)]
consider an $n$-chart as an $(n+1)$-chart,
\item[(ii)]
for an $n$-chart,
change all the edges of label $k$ to 
ones of label $n-k$
for each $k=1,2,\cdots,n-1$,
simultaneously 
(see Fig.~\ref{figTrefoil}(b)),
\item[(iii)] 
add a positive constant integer $k$ 
to all the labels simultaneously 
(so that 
the $n$-chart changes to an $(n+k)$-chart)
(see Fig.~\ref{figTrefoil}(c)),
\item[(iv)]
reverse the orientation of 
all the edges (see Fig.~\ref{figTrefoil}(d)),
\item[(v)]
change a chart 
by the reflection in the sphere 
(see Fig.~\ref{figTrefoil}(e)).
\end{enumerate} 
The set of all the charts lor-equivalent to 
a chart $\Gamma$ is called 
the {\it lor-family of $\Gamma$} \cite{Gambit}.
For example, the lor-family of 
the 4-chart describing the 2-twist spun trefoil 
contains the charts as shown in 
Fig.~\ref{figTrefoil}.

Let $\Gamma$ be a chart.
For each label $m$, we denote by $\Gamma_m$
the subgraph of $\Gamma$
consisting of all the edges of label $m$ and their vertices.

Now we define a type of a chart:
Let $\Gamma$ be a chart, 
and $n_1,n_2,\dots,n_p$ integers.
The chart $\Gamma$ is of {\it type $(n_1,n_2,\dots,n_k)$} if there exists a label $m$ of $\Gamma$ satisfying the following three conditions:
\begin{enumerate}
	\item[(i)] For each $i=1,2,\dots, k$, 
	the chart $\Gamma$ contains exactly $n_{i}$ white vertices in $\Gamma_{m+i-1}\cap \Gamma_{m+i}$.
	\item[(ii)] If $i<0$ or $i>k$, then $\Gamma_{m+i}$ does not contain any white vertices.
	\item[(iii)] Both of the two subgraphs $\Gamma_m$ and $\Gamma_{m+k}$ contain at least one white vertex.
\end{enumerate}
If we want to emphasize the label $m$,
then we say that $\Gamma$ is of {\it type $(m;n_1,n_2,\dots,n_k)$}. 
Note that $n_1\ge1$ and $n_k\ge1$ by the condition (iii).

In this paper
we shall show the following theorem:

\begin{theorem}
\LABEL{Type(3,3)} 
Let $\Gamma$ be a minimal chart of type $(3,3)$.
Then $\Gamma$ is C-move equivalent to a minimal chart
containing a subchart in the lor-family of the $2$-twist spun trefoil.
\end{theorem}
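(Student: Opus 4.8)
The plan is to study the middle subgraph $\Gamma_{m+1}$, which by the definition of type $(3,3)$ carries all six white vertices of $\Gamma$: three lie in $\Gamma_m\cap\Gamma_{m+1}$ and three in $\Gamma_{m+1}\cap\Gamma_{m+2}$, and these are precisely the white vertices of $\Gamma_m$ and of $\Gamma_{m+2}$ respectively. Applying the lor-moves (i) and (iii) we may normalise the labels so that $m+1=2$, i.e.\ $\Gamma$ has labels $1,2,3$; then at each white vertex the six incident edges alternate in label between $2$ and one of $1,3$, so in particular every white vertex is incident to exactly three edges of label $2$, and $\Gamma_2$ is a graph on these six degree‑three vertices, decorated by black vertices, crossings, and possibly hoops or free edges.

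First I would pin down the shape of $\Gamma_2$ up to C-moves. Using the minimality of $\Gamma$ together with the structural constraints on minimal charts (no superfluous free edges, hoops, rings, or small reducible loops), $\Gamma_2$ reduces to a cubic graph on its six white vertices, possibly carrying a few pendant edges to black vertices and some crossings on its edges. I would then enumerate the finitely many resulting shapes: the connected possibilities are the standard cubic multigraphs on six vertices — $K_{3,3}$, the triangular prism, and their theta‑type degenerations — while if $\Gamma_2$ is disconnected the white vertices distribute as $3+3$ or $4+2$ over the components, again a short list. For each shape the cyclic (rotation) order of the label‑$2$ edges at every white vertex is recorded.

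Next I would bring $\Gamma_1$ and $\Gamma_3$ into the picture. Each meets $\Gamma_2$ in a prescribed triple of white vertices, and because $\Gamma$ has no white vertices of labels other than $1,2,3$, the label‑$1$ and label‑$3$ edges are forced, with the correct orientations, into the cyclic order at each white vertex alternating with the label‑$2$ edges. This orientation/rotation bookkeeping eliminates all but a short list of global configurations. For each surviving configuration I would apply C-moves — sliding crossings off edges, pushing edges through white vertices, and the standard local moves near a white vertex — to bring $\Gamma$ into a normal form in which a disk neighbourhood of a suitable sub‑collection of the white vertices, together with the edges joining them, is literally one of the charts in the lor-family of the $2$‑twist spun trefoil shown in Fig.~\ref{figTrefoil}. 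Since every step keeps $\Gamma$ in its C-move class, this proves the theorem.

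I expect the last two steps to be the main obstacle. The case analysis for how $\Gamma_1$ and $\Gamma_3$ attach to the admissible shapes of $\Gamma_2$ is large, and in each non-trivial case one must use minimality to forbid every complexity‑reducing move while simultaneously showing that C-moves suffice to expose the trefoil subchart; the borderline shapes of $\Gamma_2$, where the demand ``cannot be reduced'' and the demand ``can be normalised to the trefoil form'' nearly collide, are the delicate heart of the argument and are presumably where the bulk of the paper's lemmas are spent.
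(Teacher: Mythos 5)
Your proposal is a programme rather than a proof: the two steps you yourself flag as ``the main obstacle'' --- enumerating the admissible shapes of the middle subgraph and then eliminating or normalising each attachment of the outer subgraphs --- are exactly where all the mathematical content lies, and neither is carried out. The claim that $\Gamma_2$ ``reduces to a cubic graph on its six white vertices'' whose connected possibilities are $K_{3,3}$, the prism, and theta-degenerations is unproved and, as stated, incomplete: once you allow loops, multiple edges, terminal edges to black vertices, and the planarity/orientation constraints coming from condition (iii) for charts, the list of embedded possibilities for a six-vertex trivalent subgraph is large, and minimality does not obviously prune it without substantial auxiliary lemmas. Nothing in the proposal explains how a given configuration is shown to be non-minimal (the paper needs, e.g., lens and loop obstructions, M3/M4-disk moves, and IO-calculation for this), nor how the surviving configuration is actually transported by C-moves onto a chart of Fig.~\ref{figTrefoil}.

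The structural choice is also strategically backwards relative to the paper. The paper does \emph{not} start from $\Gamma_{m+1}$, which carries all six white vertices and is therefore the least constrained subgraph; it starts from $\Gamma_m$, which carries only three. Lemma~\ref{TwoThreeLoop}(b) then pins $\Gamma_m$ down to the \emph{single} graph of Fig.~\ref{figThree}, whose complement in $S^2$ is three disks $D_1,D_2,D_3$ (a $2$-angled disk and two $3$-angled disks). The whole argument then reduces to locating the three white vertices of $\Gamma_{m+2}$ among these disks and invoking the $2$- and $3$-angled disk lemmata (Lemmas~\ref{TwoAngledDisk} and \ref{Theorem3AngledDisk}) together with IO-calculation to force one of the four local pictures of Fig.~\ref{figNoMinimalChartSix}, of which only (d) is compatible with minimality. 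If you want to salvage your approach, the first fix is to replace ``analyse $\Gamma_{m+1}$'' by ``analyse $\Gamma_m$ (or $\Gamma_{m+2}$), which has only three white vertices and hence a unique shape''; without that, the enumeration you defer to is far larger than the one the paper actually performs, and the proposal as written does not constitute a proof.
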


The paper is organized as follows.
In Section~\ref{s:Prel},
we define charts.
In Section~\ref{s:LensLoop},
we review a useful lemma for a lens,
a disk whose boundary consists of edges of label $m$ and edges of label $m+1$
satisfying some condition.
And we review a useful lemma for a loop.
In Section~\ref{s:kAngledDisk},
 we review a $k$-angled disk, a disk whose boundary consists of edges of label $m$
and contains exactly $k$ white vertices.
In particular,
we prove a lemma for a 2-angled disk.
In Section~\ref{s:ThreeAngledDisk},
we review three lemmata for a 3-angled disk.
In Section~\ref{s:IOC},
we introduce some property of charts.
In Section~\ref{s:M3M4move},
we prove a useful lemma called New Disk Lemma.
And we give  generalizations of a C-I-M3 move
and a C-I-M4 move.
In Section~\ref{s:NoMinimalSix},
we give four examples of charts of type $(3,3)$.
Three of them are non-minimal charts,
the last one is a minimal chart C-move equivalent to a chart containing a subchart 
in the lor-family of the 2-twist spun trefoil.
In Section~\ref{s:MainTheorem},
we prove Main Theorem (Theorem~\ref{Type(3,3)}).



\section{{\large Preliminaries}}
\label{s:Prel}

Let $n$ be a positive integer. An {\it $n$-chart} is an oriented and labeled graph in a disk,
which may be empty or have closed edges without vertices, called {\it hoops},
satisfying the following four conditions:
\begin{enumerate}
	\item[(i)] Every vertex has degree $1$, $4$, or $6$.
	\item[(ii)] The labels of edges are in $\{1,2,\dots,n-1\}$.
	\item[(iii)] In a small neighborhood of each vertex of degree $6$,
	there are six short arcs, three consecutive arcs are oriented inward and the other three are outward, and these six are labeled $i$ and $i+1$ alternately for some $i$,
	where the orientation and the label of each arc are inherited from the edge containing the arc.
	\item[(iv)] For each vertex of degree $4$, diagonal edges have the same label and are oriented coherently, and the labels $i$ and $j$ of the diagonals satisfy $|i-j|>1$.
\end{enumerate}
We call a vertex of degree $1$ a {\it black vertex,} a vertex of degree $4$ a {\it crossing}, and a vertex of degree $6$ a {\it white vertex} respectively (see Fig.~\ref{fig01}).
Among six short arcs
in a small neighborhood of
a white vertex,
a central arc of each three consecutive arcs
oriented inward or outward 
is called 
a {\it middle arc} at the white vertex
(see Fig.~\ref{fig01}(c)).
There are two middle arcs
in a small neighborhood of
each white vertex.

\begin{figure}[ht]
\centerline{\includegraphics{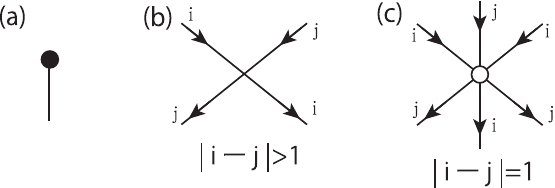}}
\vspace{5mm}
\caption{\label{fig01}(a) A black vertex. (b) A crossing. (c) A white vertex.}
\end{figure}

Let $D_1^2,D_2^2$ be disks,
and $pr_2:D_1^2\times D_2^2\to D_2^2$
the projection defined by $pr_2(x,y)=y$.
Let $Q_n$ be a set of $n$ interior points of $D_1^2$.
A {\it surface braid} $S$ is an oriented surface 
embedded properly in $D_1^2\times D_2^2$
such that
the map $pr_2|_S:S\to D_2^2$ is a branched covering of degree $n$
and $\partial S=Q_n\times\partial D_2^2$
\cite[Chapter 14]{BraidBook}.
A surface braid can be represented by a motion picture method,
a one-parameter family of geometric $n$-braids $\{b_t\}_{t\in[0,1]}$
except for a finite number of values $t_1,t_2,\cdots,t_m\in [0,1]$.
A motion picture for a white vertex is a motion picture as shown in Fig.~\ref{MotionPicture}(a) (cf. \cite[p. 132, Figure 18.5]{BraidBook}).
A motion picture for a crossing is a motion picture as shown in Fig.~\ref{MotionPicture}(b) (cf. \cite[p. 131, Figure 18.4]{BraidBook}).
A motion picture for a black vertex is a motion picture as shown in Fig.~\ref{MotionPicture}(c) (cf. \cite[p. 134, Figure 18.7]{BraidBook}).
A black vertex is corresponding to a singular point of a branched covering map.

\begin{figure}[thb]
\centerline{\includegraphics{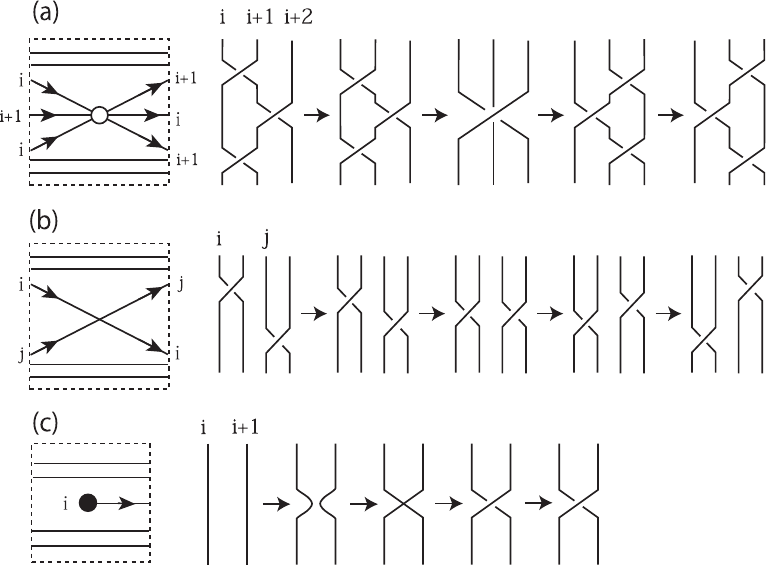}}
\vspace{5mm}
\caption{\label{MotionPicture}}
\end{figure}

Now {\it $C$-moves} are local modifications 
of charts in a disk
as shown in 
Fig.~\ref{fig02}
(cf. \cite{KnottedSurfaces}, \cite{BraidBook}, \cite{Tanaka}).
These C-moves as shown in 
Fig.~\ref{fig02} 
are examples of C-moves.
Note that any C-move is realized by C-moves as shown in 
Fig.~\ref{fig02} and ambient isotopic deformations of the disk (see \cite{KnottedSurfaces}, \cite{BraidBook}, \cite{Tanaka}).


\begin{figure}[ht]
\centerline{\includegraphics{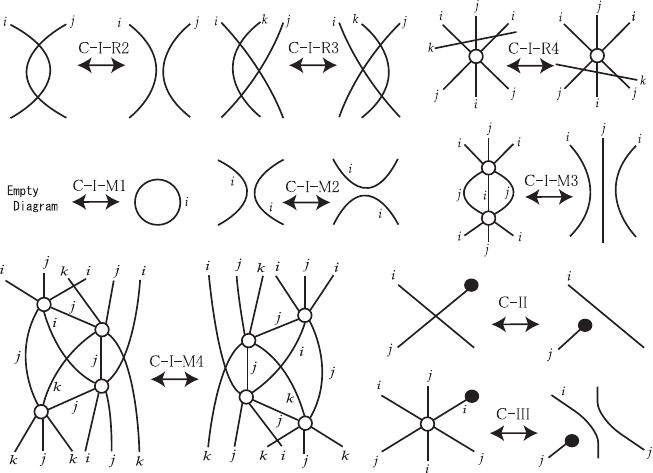}}
\vspace{5mm}
\caption{\label{fig02}For the C-III move, the edge containing the black vertex does not contain a middle arc in the left figure.}
\end{figure}

We showed the difference of a chart in a disk and in a 2-sphere (see \cite[Lemma 2.1]{ChartApp1}).
This lemma follows from that there exists a natural one-to-one correspondence between $\{$charts in $S^2\}/$C-moves and $\{$charts in $D^2\}/$C-moves, conjugations
(\cite[Chapter 23 and Chapter 25]{BraidBook}).
To make the argument simple, we assume that 
the charts lie on the 2-sphere instead of the disk.
\begin{assumption}
In this paper,
all charts are contained in the $2$-sphere $S^2$.
\end{assumption}
We have the special point in the 2-sphere $S^2$, called the point at infinity,
 denoted by $\infty$.
In this paper, all charts are contained in a disk such that the disk 
does not contain the point at infinity $\infty$.


Let $\Gamma$ be a chart.
For each label $m$, we denote by $\Gamma_m$
the subgraph of $\Gamma$
consisting of all the edges of label $m$ and their vertices. Let $L$ be the closure of a connected component 
of the set obtained by taking out 
all the white vertices from $\Gamma_m$.
If $L$ contains at least one white vertex
but does not contain any black vertex,
then $L$ is called an {\it internal edge of label $m$}.
Hence an internal edge may contain a crossing of $\Gamma$.


Let $\Gamma$ be a chart,
and $m$ a label of $\Gamma$. 
A {\it hoop} is a closed edge of $\Gamma$ without vertices 
(hence without crossings, neither).
A {\it ring} is a simple closed curve in $\Gamma_m$ 
containing a crossing but not containing any white vertices. 
An edge of $\Gamma$
is called 
a {\it free edge}
if it has
two black vertices.
An edge of $\Gamma$ is called 
a {\it terminal edge}
if it has
a white vertex and a black vertex.
An internal edge of label $m$
is called a {\it loop} 
if it has only one white vertex.

A hoop or a ring is said to be {\it simple} 
if one of the two complementary domains
of the curve
does not contain any white vertices.

We can assume that
all minimal charts $\Gamma$
satisfy the following four conditions 
(see \cite{ChartApp1},\cite{ChartAppII},\cite{ChartAppIII}):

\begin{assumption}
\label{NoTerminal}
If an edge of $\Gamma$
contains a black vertex,
then the edge is a free edge or a terminal edge.
Hence 
any terminal edge contains a middle arc.
\end{assumption}

\begin{assumption}
\label{NoSimpleHoop}
All free edges and simple hoops in $\Gamma$ 
are moved into a small neighborhood $U_\infty$ 
of the point at infinity $\infty$. 
Hence
we assume that 
$\Gamma$ does not contain free edges
nor simple hoops, 
otherwise mentioned. 
\end{assumption}

\begin{assumption}
\label{AssumptionRing}
Each complementary domain of
any ring and hoop must contain 
at least one white vertex. 
\end{assumption}

\begin{assumption}
The point at infinity $\infty$ is moved in any complementary domain of $\Gamma$.
\end{assumption}

In this paper
for a set $X$ in a space
we denote 
the interior of $X$,
the boundary of $X$ and
the closure of $X$
by Int$X$, $\partial X$
and $Cl(X)$
respectively.


\section{{\large Lenses and loops}}
\label{s:LensLoop}

Let $\Gamma$ be a chart. 
Let $D$ be a disk 
such that 
\begin{enumerate}
\item[(1)] $\partial D$ consists of an internal edge $e_1$ of label $m$ and an internal edge $e_2$ of label ${m+1}$, and 
\item[(2)] any edge containing a white vertex in $e_1$ does not intersect the open disk Int$D$.
\end{enumerate}
Note that $\partial D$ may contain crossings.
Let $w_1$ and $w_2$ be the white vertices in $e_1$. 
If the disk $D$ satisfies one of the following conditions, then $D$ is called  {\it a lens of type $(m,m+1)$}
(see Fig.~\ref{lens}):
\begin{enumerate}
	\item[(i)] Neither $e_1$ nor $e_2$ contains a middle arc. 
	\item[(ii)] One of the two internal edges $e_1$ and $e_2$ contains middle arcs at both white vertices $w_1$ and $w_2$ simultaneously.
\end{enumerate}

\begin{figure}[ht]
\centerline{\includegraphics{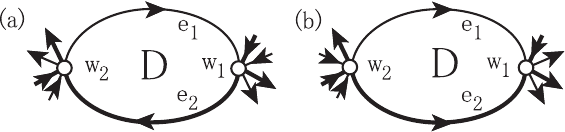}}
\vspace{5mm}
\caption{\label{lens}}
\end{figure}

\begin{lemma}
{\em $($\cite[Theorem 1.1]{ChartApp1} and \cite[Corollary 1.3]{ChartAppII}$)$}
\LABEL{PolandLemma}
Let $\Gamma$ be a minimal chart.
Then we have the following:
\begin{enumerate}
\item[{\rm (a)}] There exist at least three white vertices in the interior of any lens.
\item[{\rm (b)}] If $\Gamma$ contains at most seven white vertices, then there is no lens of $\Gamma$.
\end{enumerate}
\end{lemma}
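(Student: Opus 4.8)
The plan is to prove both parts by contradiction, using the minimality of $\Gamma$ together with the complexity-reducing C-moves. For part~(a), suppose some lens of type $(m,m+1)$ has at most two white vertices in its interior, and among all such lenses choose one, $D$, whose interior $\mathrm{Int}\,D$ contains the fewest white vertices; write $e_1\subset\Gamma_m$ and $e_2\subset\Gamma_{m+1}$ for the two boundary edges, meeting at the white vertices $w_1,w_2$. First I would use Assumptions~\ref{NoTerminal}--\ref{AssumptionRing} to pin down the local picture: no terminal or free edge of $\Gamma_m$ carries a crossing, there are no free edges or simple hoops, and every ring and hoop has white vertices in both of its complementary domains. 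Consequently the arcs entering $\mathrm{Int}\,D$ from $w_1$ and $w_2$ are honest edges of $\Gamma_m$ or $\Gamma_{m+1}$, and I can first clear any crossings off $e_1$ and $e_2$ by C-II and C-III moves without creating new white vertices.

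Next I would split according to whether $\mathrm{Int}\,D$ contains $0$, $1$, or $2$ white vertices. The base case is the empty lens: the two boundary edges then bound a bigon carrying only arcs whose endpoints lie on $\partial D$ or at black vertices, and the defining conditions~(i) and~(ii) of a lens are exactly the situations in which a C-I move (for instance a C-I-M2 move) cancels the pair $w_1,w_2$, lowering $w(\Gamma)$ by two and contradicting minimality. For one and two interior white vertices I would enumerate the ways in which the three $m$-arcs and three $(m+1)$-arcs at $w_1,w_2$ connect to the interior white vertices, using the middle-arc bookkeeping in~(i) and~(ii) to keep labels and orientations consistent with condition~(iii) of the definition of a chart. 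In each configuration I expect either a strictly smaller lens to appear inside $D$, contradicting the minimal choice of $D$, or a C-I-M3 or C-I-M4 move to become applicable and decrease the complexity $(w(\Gamma),-f(\Gamma))$. Exhausting the subcases gives part~(a).

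For part~(b), assume $\Gamma$ is minimal with at most seven white vertices and, for contradiction, that it contains a lens $D$. By part~(a) the interior $\mathrm{Int}\,D$ already carries at least three white vertices, and $w_1,w_2\in\partial D$ supply two more, so $\Gamma$ has at least five white vertices. To upgrade this to a contradiction I would combine part~(a) with a count of the white vertices sorted by label: examining how the interior white vertices attach to $e_1$ and $e_2$ forces some pair of them to be joined by an edge of $\Gamma_m$ and an edge of $\Gamma_{m+1}$, hence to bound a strictly smaller lens inside $D$. Applying part~(a) to this inner lens forces yet more white vertices to lie inside $\mathrm{Int}\,D$, and the closure constraints on the labels adjacent to $m$ and $m+1$ then push the total past seven, contradicting the hypothesis.

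The main obstacle lies entirely in part~(a), and specifically in the finite but delicate case analysis for one and two interior white vertices. The difficulty is that there are many combinatorial types for how the six arcs at $w_1,w_2$ meet the interior, and for each type one must certify that some reducing C-move genuinely applies; this hinges on using the middle-arc conditions~(i) and~(ii) correctly and on having cleared crossings off $e_1,e_2$ beforehand. Keeping this enumeration exhaustive while staying consistent with the orientation rule is where the real work lies; the empty-lens reduction and the counting in part~(b) are comparatively routine by contrast.
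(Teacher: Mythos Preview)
This lemma is not proved in the present paper at all: it is quoted verbatim from \cite[Theorem~1.1]{ChartApp1} and \cite[Corollary~1.3]{ChartAppII}, where the arguments occupy a substantial part of those papers. So there is no ``paper's own proof'' to compare against, and the only question is whether your sketch stands on its own.

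It does not, and the gaps are concrete. In part~(a), even the base case is too quick: an ``empty'' lens may still carry crossings on $e_1,e_2$ and have arcs of other labels running through $\mathrm{Int}\,D$, while a C-I-M2 move requires a very specific clean local picture; reducing an arbitrary empty lens to that picture is exactly the nontrivial work carried out in \cite{ChartApp1}. Your proposed clearing ``by C-II and C-III moves'' is not correct as stated, since a C-III move acts at a black vertex on a terminal edge, not at a crossing on an edge joining two white vertices. For one and two interior white vertices you only promise an enumeration and say you ``expect'' each subcase to admit a reducing move; that is a plan, not an argument, and the actual case analysis in \cite{ChartApp1} is lengthy and uses auxiliary lemmas you have not stated. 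In part~(b) the counting simply fails: three interior plus two boundary white vertices gives five, which does not exceed seven. Your repair is to posit a strictly smaller nested lens inside $D$, but nothing in your argument forces such a lens to exist, and even if one did, its required three interior white vertices would lie in $\mathrm{Int}\,D$ and hence already be among the three you have counted, so the totals do not add. The actual proof in \cite{ChartAppII} proceeds differently, analyzing the edges of $\Gamma_m$ and $\Gamma_{m+1}$ incident to $w_1,w_2$ from \emph{outside} the lens and invoking further structural results to force additional white vertices there.
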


Let $X$ be a set in a chart $\Gamma$.
Let
 $$w(X)=\text{the number of white vertices in $X$,}$$
$$c(X)=\text{the number of crossings in }X.$$

Let $\ell$ be a loop of label $m$ in a chart $\Gamma$. 
Let $e$ be the edge of label $m$
containing the white vertex in $\ell$ with $e\not\subset\ell$. 
Then the loop $\ell$ bounds two disks on the 2-sphere. 
One of the two disks does not contain the edge $e$. 
The disk is called {\it the associated disk of the loop $\ell$} 
(see Fig.~\ref{fig03}).


\begin{figure}[ht]
\centerline{\includegraphics{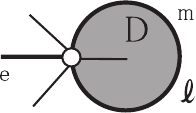}}
\vspace{5mm}
\caption{\label{fig03}
The gray region is the associated disk of a loop $\ell$.}
\end{figure}

\begin{lemma}
\LABEL{LoopTwoVertices}
{\em $($\cite[Lemma 4.2]{ChartAppII}$)$}
Let $\Gamma$ be a minimal chart with a loop $\ell$ of label $m$ with the associated disk $D$.
Then $w(\Gamma\cap${\rm Int}$D)\ge2$ and 
$w(\Gamma\cap(S^2-D))\ge2$.
\end{lemma}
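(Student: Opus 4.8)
The plan is to prove the symmetric statement that \emph{each} of the two open disks bounded by $\ell$ contains at least two white vertices; this is exactly $w(\Gamma\cap\mathrm{Int}\,D)\ge2$ together with $w(\Gamma\cap(S^2-D))\ge2$, since $\ell$ is a simple closed curve on $S^2$ and $S^2-D$ is the interior of the second disk it bounds. I would argue by minimal counterexample: among all pairs consisting of a loop and one of its two bounding disks $\Delta$ for which the conclusion fails, choose one for which $w(\Gamma\cap\Delta)$ is smallest, so $w(\Gamma\cap\Delta)\le1$. Two elementary facts drive everything. First, by Assumption~\ref{NoTerminal} a terminal edge must carry a middle arc at its white vertex; contrapositively, an edge whose arc at a white vertex is \emph{not} a middle arc, and which meets no further white vertex, cannot be terminal, so it must run to another white vertex. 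Second, no edge of label $m$ or $m\pm1$ can cross the boundary edge $\ell$ of label $m$, because a crossing requires its two diagonal labels to differ by more than $1$; hence an edge issuing from a white vertex of $\Delta$ whose label is $m$ or $m\pm1$ stays inside $\Delta$ until it reaches a vertex.

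First I would analyse the six arcs at the white vertex $v$ of $\ell$. Since $\ell$ is a single edge of label $m$ running through $v$, it uses one inward and one outward label-$m$ arc; as exactly one of the three label-$m$ arcs at $v$ is outward and that one is precisely the label-$m$ middle arc, the loop must occupy it. Consequently the third label-$m$ arc, which belongs to $e$, is inward and is \emph{not} a middle arc, while on the associated-disk side the unique arc pointing into $\mathrm{Int}\,D$ is a non-middle arc of label $m+\varepsilon$ (the label-$(m+\varepsilon)$ middle arc always lies on the side containing $e$). On the $e$-side, $e$ and one further arc of label $m+\varepsilon$ are likewise non-middle. In short, whichever side $\Delta$ we consider, at least one non-middle edge of label $m$ or $m+\varepsilon$ enters $\Delta$.

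Now the two cases. If $w(\Gamma\cap\Delta)=0$, the non-middle edge entering $\Delta$ meets no white vertex other than $v$, so by the first fact it cannot be terminal; but with no white vertex available in $\Delta$ it has nowhere to end, a contradiction. If $w(\Gamma\cap\Delta)=1$, with unique interior white vertex $v'$, then the entering non-middle edge runs to $v'$, and I would count the arcs at $v'$ trapped in $\Delta$. On the associated-disk side these are the three label-$(m+\varepsilon)$ arcs: one returns to $v$, and at most one of the other two is terminal (only one of them can be the label-$(m+\varepsilon)$ middle arc), and since a single surviving non-terminal arc cannot close up alone, both remaining arcs are non-terminal and form a loop $\ell'$ of label $m+\varepsilon$ based at $v'$; the $e$-side is analogous, counting all six arcs at $v'$ (all of label $m$ or $m+\varepsilon$, hence trapped), where at most two are terminal and at most three run to $v$, so the remaining arcs again force a loop at $v'$. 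One of the two disks bounded by $\ell'$ lies in $\Delta$ and, as $v'$ is the only white vertex of $\Delta$, contains no white vertex in its interior; this is a strictly smaller counterexample, contradicting minimality. Hence $w(\Gamma\cap\Delta)\ge2$ on both sides.

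The hard part, demanding the most care, is the orientation bookkeeping at $v$ that forces the entering edge to be non-middle: this single structural fact makes the zero-vertex case collapse via Assumption~\ref{NoTerminal} and guarantees a genuine sub-loop in the one-vertex case. One must check all admissible inward/outward patterns and both positions of $e$, verifying in each that the label-$(m+\varepsilon)$ middle arc sits on the $e$-side. A secondary subtlety is to confirm that $\ell'$ and its relevant bounding disk are genuinely trapped inside $\Delta$, so that the minimal-counterexample induction is well founded; here one uses the no-crossing fact for labels within distance one of $m+\varepsilon$. Once these local checks are in place the argument needs neither the lens lemma (Lemma~\ref{PolandLemma}) nor any global count, only Assumption~\ref{NoTerminal} and the crossing-label condition.
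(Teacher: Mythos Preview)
This lemma is not proved in the present paper; it is quoted verbatim from \cite[Lemma~4.2]{ChartAppII}, so there is no in-paper argument to compare against. Your self-contained approach via a minimal counterexample is sound and, with two small repairs, goes through.

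First, the clause ``exactly one of the three label-$m$ arcs at $v$ is outward and that one is precisely the label-$m$ middle arc'' describes only one of the two alternating patterns at $v$; in the other pattern exactly one label-$m$ arc is \emph{inward}, and that one is the middle arc. In both patterns the loop is forced to pass through the unique label-$m$ middle arc, so your conclusions (the third label-$m$ arc $e$ is non-middle, and the single arc entering the associated disk is a non-middle label-$(m+\varepsilon)$ arc) survive. You flag this check yourself, so this is a matter of phrasing rather than a gap.

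Second, on the $e$-side with one interior white vertex $v'$, the parenthetical ``all of label $m$ or $m+\varepsilon$'' is not guaranteed: the edge $e$ forces $v'\in\Gamma_m$, but the other label at $v'$ may equally well be $m-\varepsilon$. This does not damage the argument, because $|m\pm\varepsilon-m|=1$ and all six arcs at $v'$ are trapped either way. Your count then works exactly as written: at most three arcs at $v'$ can reach $v$ (only three arcs of $v$ lie on this side), at most two are terminal by Assumption~\ref{NoTerminal}, so at least one arc remains, and by parity at least two remain and form a loop at $v'$. One of the two disks bounded by that loop lies in $\text{Int}\,\Delta$ and has no interior white vertex, giving the smaller counterexample you want. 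So the only correction needed is to weaken the label claim for $v'$ to ``label $m$ or $m\pm1$.''
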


\begin{lemma}
\LABEL{LoopSixVertices}
{\em $($cf. \cite[Theorem 1.4]{ChartAppII}$)$}
Let $\Gamma$ be a minimal chart with $w(\Gamma)=6$.
If $\Gamma$ contains a loop,
then $\Gamma$ is of type $(2,4)$ or $(4,2)$.
\end{lemma}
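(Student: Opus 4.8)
The plan is to normalize the loop by lor-equivalence, read off the local structure at its white vertex, and then eliminate every type except $(2,4)$ and $(4,2)$ by a case analysis.

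Let $\ell$ be a loop of $\Gamma$, say of label $m$, with white vertex $w_0$ and associated disk $D$, and let $\varepsilon\in\{+1,-1\}$ be the integer with $w_0\in\Gamma_{m+\varepsilon}$. Since the types $(2,4)$ and $(4,2)$ are lor-equivalent, I may assume $\varepsilon=+1$; then $w_0$ carries the labels $m$ and $m+1$, and besides the two arcs of $\ell$ there is exactly one further arc of label $m$ at $w_0$, lying on an edge $e$ of $\Gamma_m$ with $e\neq\ell$. By Lemma~\ref{LoopTwoVertices}, $w(\Gamma\cap\mathrm{Int}\,D)\ge 2$ and $w(\Gamma\cap(S^2-D))\ge 2$; since $w(\Gamma)=6$ and $w_0\in\partial D=\ell$, exactly one of these two numbers equals $2$ and the other equals $3$.

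Next I inspect the six arcs at $w_0$. Because $\ell$ is a loop it uses one inward and one outward arc of label $m$, and one then checks that $\ell$ contains the middle arc of one of the two consecutive triples of arcs; tracing the cyclic order shows that $\mathrm{Int}\,D$ meets exactly one edge of label $m+1$ at $w_0$ — call it $f$ — whereas on the $S^2-D$ side $w_0$ is incident to $e$ and to two arcs of label $m+1$, and neither $e$ nor $f$ carries the middle arc at $w_0$. By Assumption~\ref{NoTerminal} this forbids $e$ and $f$ to be terminal edges; and since no edge of $\Gamma_m$ or of $\Gamma_{m+1}$ can cross the label-$m$ edge $\ell$, it follows that $f$ ends at a white vertex of $\Gamma_{m+1}$ in $\mathrm{Int}\,D$ and $e$ ends at a white vertex of $\Gamma_m$ in $S^2-D$, and that neither $e$ nor $f$ is a loop. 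In particular $w(\Gamma_m)\ge 2$, and $\Gamma_{m+1}$ has a white vertex in $\mathrm{Int}\,D$.

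It remains to determine the type. Using Lemma~\ref{PolandLemma} (no minimal chart with at most seven white vertices has a lens) to control the edges of label $m-1$ and $m+2$ attached along $\Gamma_m$ and $\Gamma_{m+1}$, together with the lemmata for 2-angled and 3-angled disks of Sections~\ref{s:kAngledDisk} and~\ref{s:ThreeAngledDisk}, one shows that the white vertices of $\Gamma$ involve exactly two consecutive labels, so that $\Gamma$ is of type $(n_1,n_2)$ with $n_1+n_2=6$ and, after renaming, $n_1=w(\Gamma_m)$, $n_2=w(\Gamma_{m+2})$. Since $\ell\subset\Gamma_m$ and $w(\Gamma_m)\ge 2$, the types $(1,5)$ and $(5,1)$ are excluded, and what remains to be ruled out is the balanced type $(3,3)$ — together with every type having three active labels, such as $(1,4,1)$ or $(2,2,2)$, that survives until the ``two consecutive labels'' step. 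This elimination is the main obstacle: Lemma~\ref{LoopTwoVertices} alone gives only the split $2,3$ of the remaining white vertices and no contradiction, so one must follow, inside $D$, the subgraph of $\Gamma_{m+1}$ emanating from $f$, and combine the white-vertex count it forces with Lemma~\ref{PolandLemma} and with the non-existence of a minimal chart having five white vertices \cite{ONS} to reach a contradiction in each offending case. What survives is type $(2,4)$ when the three extra white vertices lie in $S^2-D$ and its mirror type $(4,2)$ in the opposite case, which is the conclusion of the lemma.
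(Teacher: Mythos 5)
The paper does not actually prove Lemma~\ref{LoopSixVertices}; it is imported from \cite[Theorem 1.4]{ChartAppII}, so there is no in-paper argument to compare against. Judged on its own terms, your preliminary analysis is sound: the lor-symmetry reduction is legitimate (the conclusion is invariant under those moves), the $2{+}3$ split of the five remaining white vertices follows from Lemma~\ref{LoopTwoVertices}, and the local inspection of the six arcs at $w_0$ (exactly one label-$(m{+}1)$ arc on the $D$-side, neither $e$ nor $f$ middle at $w_0$, hence by Assumption~\ref{NoTerminal} neither is terminal and each must reach a further white vertex on its own side of $\ell$) is correct, using that edges of labels $m$ and $m+1$ cannot cross.

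However, there is a genuine gap at exactly the point where the lemma has content. The assertion ``one shows that the white vertices of $\Gamma$ involve exactly two consecutive labels'' and the subsequent elimination of type $(3,3)$ and of the three-label types such as $(2,2,2)$ and $(1,4,1)$ are never carried out: you name the tools (Lemma~\ref{PolandLemma}, the $2$- and $3$-angled disk lemmata, \cite{ONS}) and describe a strategy (``follow, inside $D$, the subgraph of $\Gamma_{m+1}$ emanating from $f$ \dots to reach a contradiction in each offending case''), but no case is actually closed. You even identify this as ``the main obstacle'' yourself. Since the whole point of the lemma in this paper is that a minimal chart of type $(3,3)$ cannot contain a loop (Statement (2) of Section~\ref{s:MainTheorem}), deferring precisely the $(3,3)$ exclusion leaves the proof incomplete. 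A smaller slip: you exclude $(1,5)$ and $(5,1)$ from ``$\ell\subset\Gamma_m$ and $w(\Gamma_m)\ge2$,'' but the loop's label need not be the label carrying fewer white vertices; the correct (and simpler) route is Lemma~\ref{TwoThreeLoop}(a), which forces $w(\Gamma_k)\ge2$ for every label $k$ whose subgraph contains a white vertex, independently of the loop.
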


 In our argument  we often construct a chart $\Gamma$. 
On the construction of a chart $\Gamma$, 
for a white vertex $w$,  
if $w$ contains the two internal edges of label $m$ 
and one terminal edge of label $m$ 
(see Fig.~\ref{MarkVertex}(a) and (b)), 
then we remove the terminal edge and
put a black dot at the center of the white vertex 
as shown in Fig.~\ref{MarkVertex}(c).


\begin{figure}[ht]
\centerline{\includegraphics{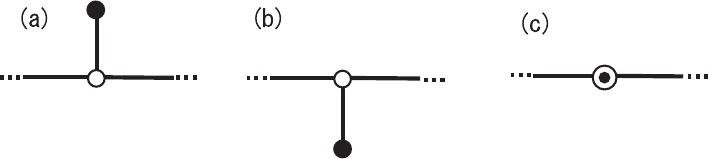}}
\vspace{5mm}
\caption{\label{MarkVertex}}
\end{figure}

\begin{lemma}
{\rm (cf. \cite[Lemma 6.1 and Lemma 6.2]{ChartAppIII})}
\LABEL{TwoThreeLoop}
Let $\Gamma$ be a minimal chart.
Let $G$ be a connected component of $\Gamma_m$.
\begin{enumerate} 
\item[{\rm (a)}] If $w(G)\ge1$, then $w(G)\ge2$.
\item[{\rm (b)}] If $w(G)=3$ and $G$ does not contain any loop,
then $G$ is the subgraph as shown 
in Fig.~\ref{figThree}. 
\end{enumerate} 
\end{lemma}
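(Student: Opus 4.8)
The plan is to analyze the connected component $G$ of $\Gamma_m$ using the degree conditions and the five standing assumptions on minimal charts. First I would establish part (a). Suppose $w(G)\ge 1$; I want to rule out $w(G)=1$. If $G$ contained exactly one white vertex $w$, then consider the three edges of $\Gamma_m$ emanating from $w$ (recall each white vertex has degree six in $\Gamma$, but as a vertex of $\Gamma_m$ it meets three edges of label $m$, since crossings are not vertices of $\Gamma_m$). Because $G$ is connected and $w$ is its only white vertex, each of these three edges must either terminate at a black vertex or return to $w$ to form a loop. By Assumption~\ref{NoTerminal} a terminal edge contains a middle arc, and by the orientation rule at a white vertex there are only two middle arcs available; I would count the arcs at $w$ and show that forcing all three edges to be terminal edges or loops produces a configuration that is either a lens or forbidden by Lemma~\ref{PolandLemma} or Lemma~\ref{LoopTwoVertices}. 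The cleanest route is: a single white vertex with all three label-$m$ edges closing up must create a loop whose associated disk, by Lemma~\ref{LoopTwoVertices}, needs at least two white vertices inside, contradicting $w(G)=1$ (once one checks the loop's white vertex lies in $\Gamma_{m\pm1}$). This yields $w(G)\ge 2$.

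For part (b), assume $w(G)=3$ and $G$ contains no loop. The main structural tool is an Euler-characteristic / edge-counting argument on $G$ as a graph in $S^2$. Each of the three white vertices contributes three edge-ends of label $m$, giving nine edge-ends total; each black vertex contributes one edge-end. Since there are no loops, every edge of $G$ is either an interior edge joining two white vertices, a terminal edge (one white, one black vertex), or a free edge. By Assumption~\ref{NoFreeEdge} and Assumption~\ref{NoSimpleHoop} I may assume $G$ has no free edges. So the nine white edge-ends are partitioned between interior edges (each absorbing two ends) and terminal edges (each absorbing one end). I would enumerate the possibilities for how three white vertices are joined, using connectivity of $G$ and the prohibition on loops to limit the interior-edge structure to a tree on the three white vertices.

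The key constraint that pins down the figure is the middle-arc bookkeeping at each white vertex combined with Assumption~\ref{NoTerminal}: every terminal edge must carry a middle arc, and each white vertex has exactly two middle arcs among its label-$m$ edges, while the third label-$m$ edge at the vertex is a non-middle edge. Tracking which of the three edges at each white vertex is the non-middle one, and matching middle arcs across interior edges, forces the three white vertices to be connected in a specific chain with prescribed terminal edges at the ends. I would also invoke Lemma~\ref{PolandLemma}(b) to exclude any lens that the gluing might otherwise produce (since $w(\Gamma)$ need not be bounded here, I must instead rule out lenses directly by the no-loop hypothesis and the connectivity, or cite part (a) of Lemma~\ref{PolandLemma} on interior white-vertex counts). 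Assembling these constraints leaves exactly one combinatorial type up to the symmetries recorded by orientation and reflection, which is the subgraph drawn in Fig.~\ref{figThree}.

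The hardest part will be the exhaustive but disciplined case analysis in part (b): several a priori admissible ways of joining three white vertices by interior and terminal edges must each be eliminated, and the eliminations rely on delicately combining the middle-arc parity at white vertices, the no-loop hypothesis, and the lens/loop lemmas. The risk is overlooking a case or mis-assigning orientations; I would organize the argument by first fixing the interior-edge tree (the only no-loop connected option on three vertices being a path or a star), then for each tree distributing the remaining terminal edges subject to the middle-arc count, and finally checking global embeddability in $S^2$ against Lemma~\ref{PolandLemma} and Lemma~\ref{LoopTwoVertices}.
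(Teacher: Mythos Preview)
The paper does not prove this lemma at all: it is quoted verbatim from \cite[Lemma~6.1 and Lemma~6.2]{ChartAppIII} and used as a black box. So there is no ``paper's proof'' to compare against; what matters is whether your direct argument stands on its own.

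It does not, because your middle-arc bookkeeping is off. At a white vertex $w\in\Gamma_m\cap\Gamma_{m+\varepsilon}$ the six short arcs alternate in label, so the inward triple is either $(m,m+\varepsilon,m)$ or $(m+\varepsilon,m,m+\varepsilon)$, and the outward triple is the other pattern. In either case exactly \emph{one} of the three label-$m$ arcs at $w$ is a middle arc, not two. Your statement ``each white vertex has exactly two middle arcs among its label-$m$ edges, while the third label-$m$ edge at the vertex is a non-middle edge'' is therefore wrong, and since this count is the engine of your case analysis in part~(b), the enumeration that follows does not go through as written. (With the correct count, Assumption~\ref{NoTerminal} forces at most one terminal edge of $\Gamma_m$ at each white vertex, which is a much tighter constraint and changes the list of admissible graphs.)

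There is a second gap in part~(a). You invoke Lemma~\ref{LoopTwoVertices} to say the associated disk of the loop must contain two white vertices, ``contradicting $w(G)=1$''. But Lemma~\ref{LoopTwoVertices} gives $w(\Gamma\cap\mathrm{Int}\,D)\ge 2$, i.e.\ two white vertices of the \emph{whole chart} $\Gamma$, not of the component $G\subset\Gamma_m$. Those vertices could lie in other components of $\Gamma_m$ or in $\Gamma_{m'}$ for $m'\ne m$, so no contradiction with $w(G)=1$ follows. A correct argument for (a) has to use something more, for instance that with only one white vertex and the (corrected) single middle label-$m$ arc, the remaining two non-middle label-$m$ arcs must form a loop, and then analyze the resulting configuration with a C-move reduction rather than a mere vertex count.
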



\begin{figure}[ht]
\centerline{\includegraphics{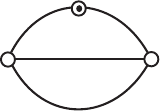}}
\vspace{5mm}
\caption{\label{figThree}}
\end{figure}

{\bf Notation.} We use the following notation:

In our argument,
we often need a name for an unnamed edge by using a given edge and a given white vertex.
For the convenience,
we use the following naming:
Let $e',e_i,e''$ be three consecutive internal edges (or edges) containing a white vertex $w_j$. Here, 
the two edges $e'$ and $e''$ are unnamed edges. 
There are six arcs in a neighborhood $U$ of the white vertex $w_j$. 
If the three arcs $e'\cap U$, $e_i \cap U$, $e'' \cap U$ lie anticlockwise around the white vertex $w_j$ in this order, 
then $e'$ and $e''$ are denoted by $a_{ij}$ and $b_{ij}$ 
respectively (see Fig.~\ref{BeforeAfter}).
There is a possibility $a_{ij}=b_{ij}$ if they are contained in a loop.

\begin{figure}[ht]
\centerline{\includegraphics{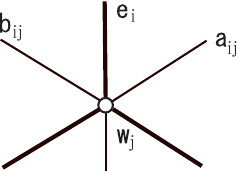}}
\vspace{5mm}
\caption{\label{BeforeAfter}}
\end{figure}



\section{{\large $k$-angled disks}}
\label{s:kAngledDisk}

Let $\Gamma$ be a chart, $m$ a label of $\Gamma$, $D$ a disk, and $k$ a positive integer.
If $\partial D\subset \Gamma_m$
and if $\partial D$ contains exactly $k$ white vertex, 
then $D$ is called {\it a $k$-angled disk of $\Gamma_m$}. 
Note that the boundary $\partial D$ may contain crossings, 
and
each of two disks bounded by a loop of label $m$ is a $1$-angled disk of $\Gamma_m$.

Let $\Gamma$ be a chart, and
$m$ a label of $\Gamma$.
Let $e$ be an internal edge (or a ternimal edge) of label $m$.
The edge $e$ is called a {\it feeler} of a $k$-angled disk $D$ of $\Gamma_m$
if the edge $e$ intersects $N-\partial D$
where $N$ is a regular neighborhood of $\partial D$ in $D$.

Let $\Gamma$ be a chart. If an object consists of some edges of $\Gamma$, arcs in edges of $\Gamma$ and arcs around white vertices,
then the object is called {\it a pseudo chart}.

\begin{lemma}
{\em $($\cite[Lemma 5.2]{ChartAppII} and 
\cite[Theorem 1.1]{ChartAppIII}$)$}
\LABEL{TwoAngledDiskWithoutFeeler}
Let $\Gamma$ be a minimal chart, and $m$ a label of $\Gamma$.
Let  $D$ be a $2$-angled disk of $\Gamma_m$ without feelers. 
If $w(\Gamma\cap${\rm Int}$D)=0$,
then a regular neighborhood of $D$ contains the pseudo chart 
as shown in Fig.~\ref{fig2AngledDisk}{\rm (a)}.
\end{lemma}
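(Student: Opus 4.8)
The plan is to read off the local picture at the two corners of $D$ and then trace what the interior edges can do, using minimality (the Lens Lemma) to discard the one bad configuration. Write $\partial D=e_1\cup e_2$ with $e_1,e_2$ edges of $\Gamma_m$ meeting at the two white vertices $w_1,w_2$, and let $\varepsilon_i\in\{+1,-1\}$ be such that $w_i\in\Gamma_{m+\varepsilon_i}$. First I would analyse a small neighbourhood $U$ of $w_i$. Around a white vertex the $m$-arcs and $(m+\varepsilon_i)$-arcs alternate, so the two boundary arcs $e_1\cap U$ and $e_2\cap U$ are both $m$-arcs; since $D$ has no feeler, the third $m$-arc at $w_i$ cannot enter $\mathrm{Int}\,D$, hence $e_1\cap U$ and $e_2\cap U$ are cyclically adjacent and the interior angular sector of $D$ at $w_i$ contains exactly one arc, an $(m+\varepsilon_i)$-arc; call the edge carrying it $g_i$. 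Because a crossing requires its two labels to differ by more than $1$, an edge of label $m\pm1$ cannot cross an edge of label $m$, so $g_i$ cannot meet $\partial D$ and stays inside $D$.

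Next I would determine $g_i$. Since $w(\Gamma\cap\mathrm{Int}\,D)=0$, the edge $g_i$ has no white vertex in its interior and, having label $m+\varepsilon_i\neq m$, cannot terminate on $\partial D$ except at $w_1$ or $w_2$. A loop based at $w_i$ is impossible, since its two arcs at $w_i$ would both have to lie in the single interior sector, which holds only one arc. Thus $g_i$ is either a terminal edge ending at a black vertex of $\mathrm{Int}\,D$, or $g_1=g_2$ is one edge joining $w_1$ to $w_2$ (forcing $\varepsilon_1=\varepsilon_2=:\varepsilon$). The heart of the argument is to exclude this second, ``connected'', case: there $g:=g_1=g_2$ cuts $D$ into two subdisks bounded by $e_1\cup g$ and $e_2\cup g$, each a disk whose boundary is one edge of label $m$ and one of label $m+\varepsilon$, i.e. a candidate lens. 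I would verify the defining condition of a lens by tracking orientations -- the orientation of each $e_j$ is consistent at $w_1$ and $w_2$, and the inward/outward splitting around each white vertex then forces one of the two subdisks to satisfy condition (i) or (ii) in the definition of a lens. Once a genuine lens is identified, Lemma~\ref{PolandLemma}(a) gives at least three white vertices in its interior, contradicting $w(\Gamma\cap\mathrm{Int}\,D)=0$. I expect this middle-arc/orientation bookkeeping to be the main obstacle: showing that the connecting edge \emph{always} produces a boundary configuration meeting the precise lens condition, rather than some intermediate orientation pattern, is the delicate case analysis on which the reduction hinges.

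This leaves only the terminal case: each $g_i$ is a terminal edge of $\Gamma_{m+\varepsilon_i}$ running into $\mathrm{Int}\,D$ and ending at a black vertex, and the two black vertices are distinct since a black vertex has degree $1$. By Assumption~\ref{NoTerminal}, applied to the label $m+\varepsilon_i$, each $g_i$ contains no crossing and carries a middle arc at $w_i$, so the picture near each corner is exactly the middle-arc-plus-terminal-edge configuration of Fig.~\ref{fig2AngledDisk}(a), and the orientation bookkeeping above fixes how the two corners fit together. Finally I would note that a regular neighbourhood of $D$ contains no further essential structure of labels $m$ or $m\pm1$: no other $m$-edge enters (no feeler), and any ring or hoop lying in $\mathrm{Int}\,D$ would, by Assumption~\ref{AssumptionRing}, bound a complementary domain free of white vertices, which is impossible; edges of labels $\le m-2$ or $\ge m+2$ may pass through $D$ but do not alter the label-$m$, label-$(m\pm1)$ pseudo chart. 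Hence a regular neighbourhood of $D$ contains the pseudo chart of Fig.~\ref{fig2AngledDisk}(a).
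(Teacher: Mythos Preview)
The paper does not give a proof of this lemma: it is quoted from \cite[Lemma~5.2]{ChartAppII} and \cite[Theorem~1.1]{ChartAppIII} and used as a black box, so there is no in-paper argument to compare against.

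That said, your outline is the standard one and is essentially correct. The reduction to ``each $g_i$ is terminal'' versus ``$g_1=g_2$ joins $w_1$ to $w_2$'' is right, and Lemma~\ref{PolandLemma} is indeed what kills the connected case. Two comments on execution. First, the lens check you flag as the main obstacle is shorter than you fear: tracking the cyclic order $e_1,g,e_2,\ldots$ at each corner and the direction of $g$ shows that in every orientation pattern either $g$ carries middle arcs at both $w_1$ and $w_2$, or one of the $e_j$ does --- so condition~(ii) of the lens definition is always met for one of the two subdisks, and condition~(2) holds because the only arc entering $\mathrm{Int}\,D$ at either corner is $g$ itself. Second, the content of Fig.~\ref{fig2AngledDisk}(a) that the paper actually \emph{uses} (see the proof of Lemma~\ref{TwoAngledDisk}(c), Case~(3)) is the orientation conclusion: both boundary edges $e_1,e_2$ are oriented from $w_1$ to $w_2$ or both from $w_2$ to $w_1$. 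In your write-up this follows, but only implicitly, from the fact that each terminal $g_i$ is middle at $w_i$ (Assumption~\ref{NoTerminal}), which forces the two adjacent $m$-arcs $e_1,e_2$ at $w_i$ to lie in the same inward/outward triple. You should state that deduction explicitly.
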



\begin{figure}[ht]
\centerline{\includegraphics{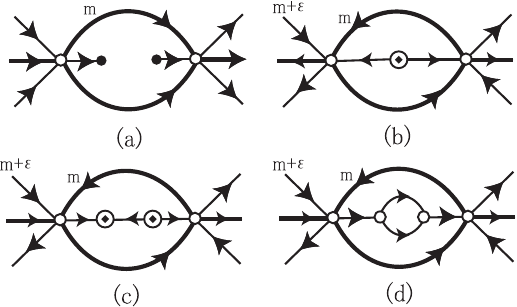}}
\vspace{5mm}
\caption{\label{fig2AngledDisk}
The thick lines are edges of label $m$,
and $\varepsilon\in\{+1,-1\}$. }
\end{figure}

Let $\Gamma$ be a chart, and $m$ a label of $\Gamma$.
Let $D$ be a $k$-angled disk of $\Gamma_m$, 
and 
$G$ a pseudo chart in $D$ with $\partial D\subset G$.
Let $r:D\to D$ be a reflection of $D$, and $G^*$ the pseudo chart obtained from $G$ by changing the orientations of all of the edges.
Then the set $\{G,G^*, r(G), r(G^*)\}$ 
is called the {\it RO-family of the pseudo chart $G$}.

{\it Warning.}
To draw a pseudo chart in a RO-family,
we draw a part of $\Gamma\cap N$
here $N$ is a regular neighborhood of a $k$-angled disk $D$.

\begin{lemma}
{\em $($cf. \cite[Lemma 5.3, Lemma 5.4 and Lemma 5.5]{ChartAppII}$)$}
\LABEL{TwoAngledDisk}
Let $\Gamma$ be a minimal chart, and $m$ a label of $\Gamma$.
Let  $D$ be a $2$-angled disk of $\Gamma_m$ without feelers. Let $w_1$ be a white vertex in $\partial D$, and 
$\varepsilon$ the integer in $\{+1,-1\}$ 
with $w_1\in\Gamma_{m+\varepsilon}$.
Suppose that $\partial D$ is oriented clockwise or anticlockwise.
Then we have the following:
\begin{enumerate}
\item[{\rm (a)}] $w(\Gamma\cap${\rm Int}$D)\ge1$. 
\item[{\rm (b)}] If $w(\Gamma\cap${\rm Int}$D)=1$,
then $D$ contains an element of the RO-family of the pseudo chart as shown in 
Fig.~\ref{fig2AngledDisk}{\rm (b)}.
\item[{\rm (c)}] 
If $w(\Gamma\cap${\rm Int}$D)=2$ and 
$w(\Gamma_{m+\varepsilon}\cap${\rm Int}$D)=2$, 
then $D$ contains an element of RO-families of the two pseudo charts as shown in Fig.~\ref{fig2AngledDisk}{\rm (c)} and {\rm (d)}.
\end{enumerate}
\end{lemma}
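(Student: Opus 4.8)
The plan is to work locally at the two white vertices of $\partial D$ and then to trace the label-$(m+\varepsilon)$ edges they send into the interior. Write $w_1,w_2$ for the two white vertices of $\partial D$ and $e_1,e_2$ for the two edges of $\Gamma_m$ forming $\partial D$. Since $w_1\in\Gamma_{m+\varepsilon}$ and $\partial D$ is a single coherently oriented cycle, at $w_1$ the two $m$-arcs lying on $\partial D$ must be oriented one inward and one outward, so that $\partial D$ can be traversed through $w_1$. Reading the six arcs around $w_1$ in cyclic order, and using that their labels alternate between $m$ and $m+\varepsilon$ while their orientations split into three consecutive inward and three consecutive outward arcs, the no-feeler hypothesis forces the remaining (third) $m$-arc at $w_1$ to lie outside $D$. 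Consequently the wedge of $\mathrm{Int}\,D$ at $w_1$ contains exactly one arc, and that arc is labeled $m+\varepsilon$. Thus exactly one edge $f_1$ of $\Gamma_{m+\varepsilon}$ enters $\mathrm{Int}\,D$ from $w_1$, and likewise exactly one edge $f_2$ enters from $w_2$; in the $a_{ij},b_{ij}$ naming these are the flanking edges of the boundary arcs on the $D$-side. The whole argument then reduces to deciding the fate of $f_1$ and $f_2$ inside $D$.

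For part (a), suppose to the contrary that $w(\Gamma\cap\mathrm{Int}\,D)=0$. Then by Lemma~\ref{TwoAngledDiskWithoutFeeler} a regular neighborhood of $D$ carries the pseudo chart of Fig.~\ref{fig2AngledDisk}(a); in that pseudo chart the two edges $e_1,e_2$ of label $m$ run in opposite senses around $\partial D$, giving $w_1$ and $w_2$ the roles of a source and a sink, so $\partial D$ is \emph{not} oriented clockwise or anticlockwise. This contradicts the hypothesis. (For intuition, the edge-tracing above shows the same thing: with no interior white vertex the arcs $f_1,f_2$ would have to end at interior black vertices or join into one edge of $\Gamma_{m+\varepsilon}$ joining $w_1$ and $w_2$; in the latter case this edge together with $e_1$ bounds a disk whose boundary is one edge of label $m$ and one edge of label $m+\varepsilon$ meeting the middle-arc condition, i.e.\ a lens, whose interior would contain at least three white vertices by Lemma~\ref{PolandLemma}(a), again a contradiction.) Hence $w(\Gamma\cap\mathrm{Int}\,D)\ge1$.

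For parts (b) and (c), the interior white vertices must absorb the arcs $f_1,f_2$. In case (b), let $w_3$ be the unique interior white vertex. One shows that $f_1$ and $f_2$ both terminate at $w_3$, so $w_3\in\Gamma_{m+\varepsilon}$; a loop based at $w_3$ is excluded because by Lemma~\ref{LoopTwoVertices} its associated disk would require two further white vertices in $\mathrm{Int}\,D$, and a lens is excluded by Lemma~\ref{PolandLemma}(a). Recording the orientations and the middle arcs at $w_3$ (its remaining arcs carry label $m+\varepsilon\pm1$ and meet no white vertex of $\mathrm{Int}\,D$, since $w(\Gamma\cap\mathrm{Int}\,D)=1$) then pins the picture down, up to reflection and reversal of orientation, to the single configuration of Fig.~\ref{fig2AngledDisk}(b), that is, to an element of its RO-family. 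In case (c), with the two interior white vertices $w_3,w_4$ both in $\Gamma_{m+\varepsilon}$, the same bookkeeping applied to $\Gamma_{m+\varepsilon}\cap D$ --- whose vertices are exactly $w_1,w_2,w_3,w_4$ together with black vertices --- combined with the exclusion of loops and lenses (Lemmata~\ref{LoopTwoVertices} and~\ref{PolandLemma}(a)), the component structure of Lemma~\ref{TwoThreeLoop}, and the minimality Assumptions~\ref{NoTerminal}--\ref{AssumptionRing}, leaves precisely the two admissible patterns joining $f_1,f_2$ to the edges among $w_3,w_4$; these are the two pseudo charts of Fig.~\ref{fig2AngledDisk}(c) and (d).

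I expect the main obstacle to be the exhaustive, non-redundant case analysis in part (c). There one must simultaneously control the label-$(m+\varepsilon)$ edges joining $w_1,w_2,w_3,w_4$, the orientation forced at each of the four white vertices, and the label-$(m+\varepsilon\pm1)$ edges emanating from $w_3$ and $w_4$, while ruling out every configuration that would produce a hidden loop, a lens, or a chart reducible by a C-move in violation of minimality. Keeping this enumeration complete yet free of duplicate cases, and verifying at each stage that the surviving pictures are exactly the RO-family representatives drawn in Fig.~\ref{fig2AngledDisk}(b)--(d), is where the bulk of the work lies.
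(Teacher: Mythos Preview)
Your overall strategy---trace the unique label-$(m+\varepsilon)$ edges $f_1,f_2$ from $w_1,w_2$ into $\mathrm{Int}\,D$ and analyse where they land---is exactly what the paper does (it calls these edges $e_1,e_2$). Parts (a) and (b) are essentially right, though you should make explicit the reason $f_1$ cannot be a terminal edge: because $\partial D$ is oriented coherently, the interior arc of label $m+\varepsilon$ at $w_1$ is \emph{not} a middle arc, so by Assumption~\ref{NoTerminal} it lies in a non-terminal edge. This is the hinge on which everything turns and the paper states it right away.

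There is a concrete gap in your sketch of (c). After you show $f_1\ni w_3$ and $f_2\ni w_4$ (ruling out $f_2\ni w_3$ via the loop argument), the remaining two edges of $\Gamma_{m+\varepsilon}$ at $w_3$ and $w_4$ fall into three cases: both are loops (excluded by Lemma~\ref{LoopTwoVertices}), one joining edge plus two terminals (giving Fig.~\ref{fig2AngledDisk}(c)), or two parallel edges $e_3,e_4$ both joining $w_3$ to $w_4$. In this last case your listed tools---loops, lenses, Lemma~\ref{TwoThreeLoop}, Assumptions~\ref{NoTerminal}--\ref{AssumptionRing}---do not determine the orientations of $e_3,e_4$. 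What the paper uses here is Lemma~\ref{TwoAngledDiskWithoutFeeler}: the two edges $e_3,e_4$ bound a $2$-angled disk $E$ of $\Gamma_{m+\varepsilon}$ without feelers and with $w(\Gamma\cap\mathrm{Int}\,E)=0$, and the pseudo chart of Fig.~\ref{fig2AngledDisk}(a) forces $e_3,e_4$ to be oriented the same way (both from $w_3$ to $w_4$, once you have fixed $f_1$ oriented from $w_1$ to $w_3$). That is what produces Fig.~\ref{fig2AngledDisk}(d) and not some spurious third configuration. Lemma~\ref{TwoThreeLoop}, by contrast, plays no role in the paper's argument for this lemma and you can drop it.
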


\begin{proof}
Let $w_2$ be the white vertex in $\partial D$ different from $w_1$,
and $\delta$ the integer in $\{+1,-1\}$ 
with $w_2\in\Gamma_{m+\delta}$.
Let $e_1$ (resp. $e_2$) be an internal edge (or a terminal edge) label ${m+\varepsilon}$ (resp. $m+\delta$) in $D$
with $e_1\ni w_1, e_2\ni w_2$.

Suppose $w(\Gamma\cap${\rm Int}$D)=0$.
Since
 $\partial D$ is oriented clockwise or anticlockwise,
the edge $e_1$ is not middle at $w_1$.
Thus the edge $e_1$ is not a terminal edge
by Assumption~\ref{NoTerminal}.
Since $w(\Gamma\cap{\rm Int}D)=0$,
we have $e_1\ni w_2$,
i.e. $e_1=e_2$.
Hence the edge $e_1$ separates the disk $D$ into two lenses
each of which does not contain any white vertices 
in its interior.
This contradicts Lemma~\ref{PolandLemma}(a). 
Hence $w(\Gamma\cap${\rm Int}$D)\ge1$.
Thus Statement (a) holds.

Suppose $w(\Gamma\cap${\rm Int}$D)=1$.
Similarly
we can show that the edge $e_1$ is not a terminal edge 
and $e_1\not\ni w_2$.
Hence $e_1$ contains a white vertex in Int$D$,
say $w_3$.
Similarly we can show that 
the edge $e_2$ is not a terminal edge 
and $e_2\not\ni w_1$.
Thus $e_2$ contains the white vertex $w_3$ in Int$D$.
Hence there exists a terminal edge of label $m+\varepsilon$
containing $w_3$.
Thus the disk $D$ contains  the pseudo chart as shown in Fig.~\ref{fig2AngledDisk}(b).
Hence Statement (b) holds.

Suppose $w(\Gamma\cap${\rm Int}$D)=2$
and $w(\Gamma_{m+\varepsilon}\cap${\rm Int}$D)=2$.
Similarly
we can show that 
neither $e_1$ nor $e_2$ is a terminal edge, 
and $e_1\not\ni w_2$, $e_2\not\ni w_1$.
Let $w_3$ be the white vertex in ${\rm Int}D$
with $e_1\ni w_3$.
Let $w_4$ be the white vertex in ${\rm Int}D$
different from $w_3$.

If $e_2\ni w_3$,
then there exists a loop of label $m+\varepsilon$ 
containing $w_4$
 bounding a disk $E$ and
 Int$E$ does not contains any white vertices.
This contradicts Lemma~\ref{LoopTwoVertices}.
Thus $e_2\not\ni w_3$.
Since $e_2$ is not a terminal edge and since $e_2\not\ni w_1$,
we have $e_2\ni w_4$.

There are three cases:
\begin{enumerate}
\item[(1)] both of $w_3,w_4$ are contained in loops of label $m+\varepsilon$,
\item[(2)] there exists an internal edge $e_3$ of label ${m+\varepsilon}$ 
containing $w_3$ and $w_4$,
and both of $w_3,w_4$ are contained in terminal edges of label ${m+\varepsilon}$,
\item[(3)] there exist two internal edges $e_3,e_4$ of label ${m+\varepsilon}$ 
containing $w_3$ and $w_4$.
\end{enumerate}

Without loss of generality 
we can assume that 
$e_1$ is oriented from $w_1$ to $w_3$.

For {\bf Case (1)},
the associated disk of each loops
does not contain any white vertices in its interior.
This contradicts Lemma~\ref{LoopTwoVertices}.
Hence Case (1) does not occur.

For {\bf Case (2)},
the edge $e_3$ is oriented from $w_4$ to $w_3$
and the edge $e_2$ is oriented from $w_4$ to $w_2$.
Hence $D$ contains the pseudo chart 
as shown in Fig.~\ref{fig2AngledDisk}(c).

For {\bf Case (3)},
there exists a $2$-angled disk $E$ of $\Gamma_{m+\varepsilon}$ in $D$ with $\partial E=e_3\cup e_4$.
Since $E$ has no feelers,
by Lemma~\ref{TwoAngledDiskWithoutFeeler}
we have that 
either the two edges $e_3,e_4$ are oriented from 
$w_3$ to $w_4$, or
the two edges $e_3,e_4$ are oriented from 
$w_4$ to $w_3$.
Since $e_1$ is oriented from $w_1$ to $w_3$,
the two edges $e_3,e_4$ are oriented from 
$w_3$ to $w_4$.
Hence $D$ contains the pseudo chart 
as shown in Fig.~\ref{fig2AngledDisk}(d).
Thus Statement (c) holds.
\end{proof}


\section{{\large $3$-angled disks}}
\label{s:ThreeAngledDisk}

\begin{lemma}
{\rm (\cite[Lemma 4.3 and Lemma 4.5]{ChartAppIII})}
\LABEL{3AngledDiskWithoutFeeler}
 Let $\Gamma$ be a minimal chart.
Let $D$ be a $3$-angled disk of $\Gamma_m$
without feelers.
If $w(\Gamma\cap${\rm Int}$D)=0$,
then $D$ contains an element in the RO-families of 
the two pseudo charts as shown in Fig.~\ref{fig3AngledDisk}{\rm (a)} and {\rm (b)}.
\end{lemma}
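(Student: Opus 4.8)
The plan is to read off the local picture at the three corner white vertices of $\partial D$, then combine a parity argument with the lens and terminal-edge constraints supplied by the earlier sections.

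\emph{Local analysis at the corners.} Write $\partial D=e_1\cup e_2\cup e_3$ with each $e_i\subset\Gamma_m$, and let $w_1,w_2,w_3$ be the three white vertices of $\partial D$. At each $w_i$ the six short arcs alternate between label $m$ and label $m+\varepsilon_i$ for some $\varepsilon_i\in\{+1,-1\}$, with two of the three label-$m$ arcs lying on $\partial D$. Since $D$ has no feelers and $w(\Gamma\cap\mathrm{Int}D)=0$, the remaining label-$m$ arc at $w_i$ may not enter $\mathrm{Int}D$ (otherwise it would be a feeler), so it lies in $S^2-D$; inspecting the inward/outward triple at $w_i$ then shows that exactly one label-$(m+\varepsilon_i)$ arc enters $\mathrm{Int}D$. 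The same inspection records whether that interior arc is a middle arc: it is a middle arc precisely when the two boundary label-$m$ arcs point the same way at $w_i$ (both in, or both out), and it is not a middle arc when one points in and the other out.

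\emph{Classifying the interior edges.} The three interior arcs extend to three edges of label $m\pm1$ inside $D$. An edge of label $m+\varepsilon$ cannot cross an edge of label $m$, since crossing labels must differ by more than one; hence such an interior edge cannot leave $D$ through $\partial D$, and its endpoints lie among the corners and the black vertices inside $D$. Because $\mathrm{Int}D$ contains no white vertex and only one interior arc emanates from each corner (so no loop can close up inside $D$), each interior edge is either a \emph{chord} joining two corners or a \emph{terminal edge} ending at a black vertex; a chord uses the interior arcs of two corners, so there is at most one chord. Now orient $\partial D$ and view the corners as vertices of an oriented triangle. Since a triangle is an odd cycle, it cannot be oriented with every vertex a source or a sink, so at least one corner is a \emph{pass-through} (one boundary arc in, one out). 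By the first paragraph a pass-through corner carries a non-middle interior arc, so by Assumption~\ref{NoTerminal} its edge cannot be terminal and must be the chord. Together with ``at most one chord'' this yields exactly one pass-through corner, a single chord emanating from it, and a single terminal edge at the remaining corner.

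\emph{Pinning orientations and concluding.} It remains to fix the orientations and the middle/non-middle data. The chord, together with the boundary edge $e_i$ joining its two endpoints, cuts off a region $R\subset D$ whose boundary is one label-$m$ edge and one label-$(m+\varepsilon)$ edge with $w(\Gamma\cap\mathrm{Int}R)=0$; by Lemma~\ref{PolandLemma}(a) this $R$ is not a lens, which forces the middle-arc positions on $e_i$ and on the chord and fixes the orientation of the chord at both of its endpoints. Matching this against the terminal edge at the third corner (whose arc must be middle, again by Assumption~\ref{NoTerminal}) leaves, up to the RO-family operations, exactly the two pseudo charts of Fig.~\ref{fig3AngledDisk}(a) and (b). Crossings possibly lying in $\mathrm{Int}D$ are harmless here, since they create no white vertices and do not alter the connectivity of the label-$(m\pm1)$ edges.

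\emph{Main obstacle.} The conceptual skeleton is short, but the real labor is the exhaustive orientation-and-label bookkeeping at each corner: one must run through the admissible inward/outward triples and the three choices of which two label-$m$ arcs lie on $\partial D$, verify that the chord's orientation is consistent at both of its endpoints (this is what selects its target corner), and repeatedly discard degenerate sub-configurations via Lemma~\ref{PolandLemma}, Lemma~\ref{LoopTwoVertices}, and Lemma~\ref{TwoAngledDiskWithoutFeeler}. Keeping this casework organized and confirming that every surviving case is genuinely one of the two advertised pseudo charts up to RO-family is the delicate step.
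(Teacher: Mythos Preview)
The paper does not prove this lemma; it is quoted from \cite[Lemma 4.3 and Lemma 4.5]{ChartAppIII} and used here as a black box, so there is no in-paper argument to compare against.

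Your sketch is sound in outline and assembles the right ingredients. The parity count on the oriented triangle is correct and in fact sharper than you state: with three edges one has either three pass-through corners or exactly one (one source, one sink, one pass-through); the former is excluded because three non-middle interior arcs cannot all lie on chords when only three arcs are available. The identification of the unique chord via Assumption~\ref{NoTerminal}, and the appeal to Lemma~\ref{PolandLemma} on the $2$-gon $R$ cut off by the chord, are the expected moves.

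Two points deserve more care. First, you allow each corner its own $\varepsilon_i$ but then tacitly assume the chord exists; if the pass-through corner has an $\varepsilon$ not shared by either neighbour, its interior edge can be neither terminal nor a chord, and you should say explicitly that this rules such a configuration out (so the pass-through corner always shares its $\varepsilon$ with at least one other corner). Second, your final step---reading off from the non-lens condition exactly the two pseudo charts of Fig.~\ref{fig3AngledDisk}(a),(b) up to RO-family---is where the content lies, and the lens definition has two separate clauses; you must check which clause fails for $R$ in each orientation sub-case, determine which of the source/sink corners the chord reaches, and then confirm that the surviving configurations really collapse to two RO-families rather than more. Your ``Main obstacle'' paragraph names this honestly, but the argument as written stops short of carrying it out. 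Also, Lemma~\ref{TwoAngledDiskWithoutFeeler} concerns a $2$-angled disk of $\Gamma_m$ (both boundary edges of label $m$), so it does not apply to $R$, whose boundary mixes labels $m$ and $m+\varepsilon$; you can drop that reference.
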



\begin{figure}[ht]
\centerline{\includegraphics{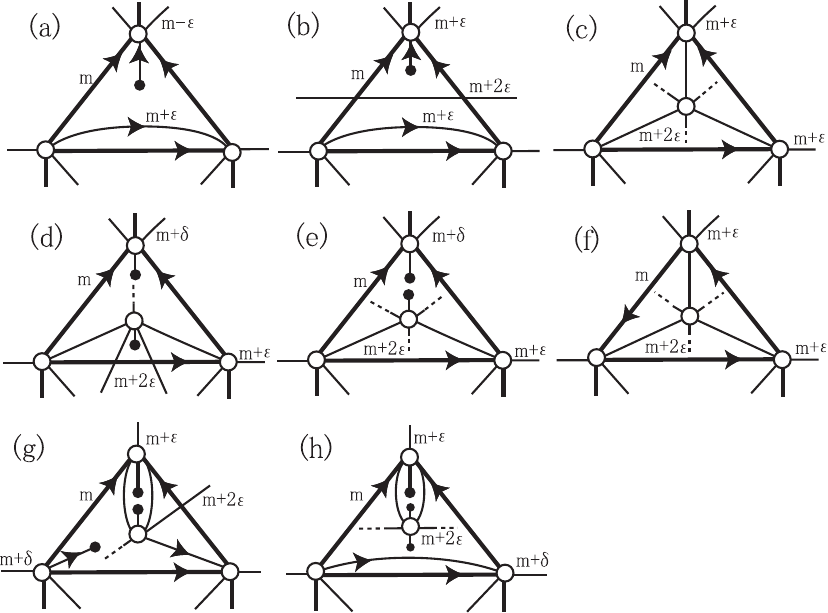}}
\vspace{5mm}
\caption{\label{fig3AngledDisk}The 3-angled disks (g) and (h) have one feeler, the others do not have any feelers.}
\end{figure}

Let $\Gamma $ and $\Gamma^\prime $ be C-move equivalent charts. 
Suppose that a pseudo chart $X$ of $\Gamma$ is also a pseudo chart of $\Gamma^\prime$. 
Then we say that 
$\Gamma$ is modified to $\Gamma^\prime$ by {\it C-moves keeping $X$ fixed}.
In Fig.~\ref{fig04},
we give examples of C-moves keeping pseudo charts  fixed.

\begin{figure}[ht]
\centerline{\includegraphics{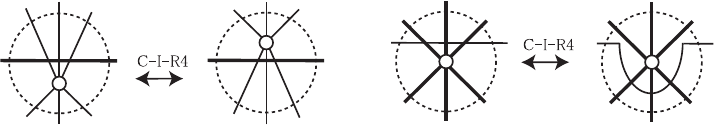}}
\vspace{5mm}
\caption{\label{fig04} C-moves keeping thicken figures fixed.}
\end{figure}

Let $\Gamma$ be a minimal chart,
and $m$ a label of $\Gamma$.
Let $D$ be a $k$-angled disk of $\Gamma_m$.
The pair of integers 
$(w(\Gamma\cap$Int$D),c(\Gamma\cap\partial D))$
is called the {\it local complexity 
with respect to $D$}.
Let ${\Bbb S}$ be the set of all
minimal charts obtained from $\Gamma$ by C-moves in a regular neighborhood of
$D$ keeping $\partial D$ fixed.
The chart $\Gamma$ is said to be 
{\it locally minimal
with respect to $D$}
if its local complexity
with respect to $D$
is minimal
among the charts in ${\Bbb S}$
with respect to 
the lexicographic order of pairs of integers.

Now 
for a chart $\Gamma$,
a $k$-angled disk $D$ of $\Gamma_m$ is {\it special} 
provided that
any feeler is a terminal edge
where a feeler is an internal edge (or a terminal edge) of label $m$
intersecting $\partial D$ and Int$D$.

\begin{lemma}
{\rm (\cite[Theorem 1.2]{ChartAppIII})}
\LABEL{Theorem3AngledDisk}
 Let $\Gamma$ be a minimal chart.
Let $D$ be a special $3$-angled disk of $\Gamma_m$
such that $\Gamma$ is locally minimal
with respect to $D$.
If $w(\Gamma\cap${\rm Int}$D)\le1$,
then $D$ contains an element in the RO-families of the eight pseudo charts as shown in Fig.~\ref{fig3AngledDisk}.
\end{lemma}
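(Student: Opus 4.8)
The plan is to read off the local picture at the three corners of $D$ and propagate its consequences inward, using the lens and loop lemmas to bound how much room the interior can have. First I would fix notation: write $\partial D=e_1\cup e_2\cup e_3$ with $e_1,e_2,e_3$ edges of $\Gamma_m$ meeting at the three corner white vertices $w_1,w_2,w_3$, and for each $i$ let $\varepsilon_i\in\{+1,-1\}$ be determined by $w_i\in\Gamma_{m+\varepsilon_i}$. Around each $w_i$ the six arcs alternate between label $m$ and label $m+\varepsilon_i$, and two of the three $m$-arcs are the boundary edges incident to $w_i$. There are then exactly two possibilities for how $D$ sits near $w_i$: a \emph{narrow} corner, where the sector of $D$ contains a single arc of label $m+\varepsilon_i$ running into $\mathrm{Int}\,D$ and the third $m$-arc lies outside $D$; or a \emph{wide} corner, where the sector contains the third $m$-arc (a feeler) flanked by two arcs of label $m+\varepsilon_i$ running into $\mathrm{Int}\,D$. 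Because $D$ is special, every feeler is a terminal edge, hence by Assumption~\ref{NoTerminal} it carries a middle arc and no crossing, which forces the two $(m+\varepsilon_i)$-arcs of a wide corner to be co-oriented. Thus the interior data of $D$ consists only of the $(m+\varepsilon_i)$-arcs emanating from the corners, the feeler terminal edges, and at most one interior white vertex, since $w(\Gamma\cap\mathrm{Int}\,D)\le1$.

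Next I would control how these $(m+\varepsilon_i)$-arcs are allowed to connect. If an $(m+\varepsilon_i)$-arc from $w_i$ ran to another corner $w_j$, then together with a subarc of $\partial D$ it would bound a region with boundary consisting of an edge of label $m$ and an edge of label $m+\varepsilon_i$; after checking the middle-arc condition at $w_i,w_j$ this region is a lens, and Lemma~\ref{PolandLemma}(a) would force at least three interior white vertices, contradicting the budget. The same budget, via Lemma~\ref{LoopTwoVertices}, rules out an $(m+\varepsilon_i)$-arc closing into a cheap loop, and Lemma~\ref{TwoThreeLoop} constrains the component of $\Gamma_{m+\varepsilon_i}$ born inside $D$ (one checks along the way, again by the lens count, that the interior-relevant labels $m+\varepsilon_i$ agree, so we may write $\varepsilon$). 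Consequently each entering $(m+\varepsilon)$-arc must either terminate at a black vertex or run to the unique interior white vertex $w_0$; counting how many interior white vertices the two co-oriented $(m+\varepsilon)$-arcs of a wide corner are forced to generate then shows, against $w(\Gamma\cap\mathrm{Int}\,D)\le1$, that at most one corner can be wide. This is the step that confines feelers to the two configurations of Fig.~\ref{fig3AngledDisk}(g) and (h).

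With the feeler count in hand I would split into the remaining cases. When $D$ has no feeler and $w(\Gamma\cap\mathrm{Int}\,D)=0$, the hypotheses of Lemma~\ref{3AngledDiskWithoutFeeler} are met and $D$ contains an element of the RO-family of Fig.~\ref{fig3AngledDisk}(a) or (b). When $D$ has no feeler and $w(\Gamma\cap\mathrm{Int}\,D)=1$, I would trace the three $(m+\varepsilon)$-arcs into $w_0$ together with its terminal edges; pinning their orientations against the middle-arc condition at each $w_i$ and at $w_0$, and applying Lemma~\ref{TwoAngledDisk} (and Lemma~\ref{TwoAngledDiskWithoutFeeler}) to the $2$-angled subdisks of $\Gamma_{m+\varepsilon}$ cut out inside $D$, yields the four pictures (c)--(f) up to RO-family. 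When $D$ has exactly one wide corner, the terminal feeler together with the surviving $(m+\varepsilon)$-structure assembles into (g) or (h). Throughout I would invoke local minimality with respect to $D$: C-moves supported in a regular neighborhood of $D$ and fixing $\partial D$ let me delete any inessential crossing on $\partial D$ and any cancelling pair in $\mathrm{Int}\,D$, so the surviving configuration is exactly a minimal representative, i.e.\ an element of one of the eight RO-families and nothing more.

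The main obstacle is the simultaneous occurrence of a feeler and the single interior white vertex, where the orientation bookkeeping is most delicate: one must respect at once the inward/outward pattern and the two middle arcs at each corner, the degree-six and middle-arc constraints at $w_0$, and the terminal-edge middle arc of the feeler, all while using local minimality to cancel crossings without disturbing $\partial D$. Closely tied to this is the feeler-count bound of the second paragraph: showing that two or three wide corners would force, through Lemma~\ref{PolandLemma} and Lemma~\ref{LoopTwoVertices}, strictly more than one interior white vertex is the crux, and it is what ultimately limits the list to eight pseudo charts. Degenerate boundaries (a boundary edge that is itself a loop, or two coinciding corners) I would dispose of at the outset, either excluded by the same lens/loop counting or absorbed into the listed cases.
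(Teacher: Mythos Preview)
The paper does not prove this lemma at all: it is quoted verbatim as Theorem~1.2 of \cite{ChartAppIII}, so there is no in-paper argument to compare your proposal against. Your outline has the right overall shape (corner types, interior budget, case split by number of feelers), but one step is genuinely wrong as stated. You assert that an $(m+\varepsilon)$-arc running from one corner $w_i$ to another corner $w_j$ ``after checking the middle-arc condition'' always bounds a lens, and hence is forbidden by Lemma~\ref{PolandLemma}. But corner-to-corner $(m+\varepsilon)$-edges are exactly what appear in the allowed pictures Fig.~\ref{fig3AngledDisk}(a),(b) (the no-feeler, $w=0$ case covered by Lemma~\ref{3AngledDiskWithoutFeeler}), so such arcs are \emph{not} always excluded. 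The lens definition has two ingredients you are skipping over: condition~(2) (no edge through a boundary white vertex enters $\mathrm{Int}\,D$), and the specific middle-arc dichotomy (i)/(ii). Whether the bigon cut off by a corner-to-corner arc is a lens depends on the orientations of the two boundary $m$-edges at each corner, and in the configurations that actually occur the middle-arc dichotomy fails. Your argument must therefore keep track of orientations from the start and distinguish the corner-to-corner arcs that are compatible with the eight pictures from those that would create a genuine lens.

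Two further places need real work rather than a parenthetical. First, the claim that all $\varepsilon_i$ coincide (``one checks along the way, again by the lens count'') is not a lens argument at all: with mixed signs the interior arcs from different corners carry different labels and cannot meet at a common interior white vertex $w_0\in\Gamma_{m+\varepsilon}\cap\Gamma_{m+2\varepsilon}$, so the obstruction is a connectivity/termination count, not Lemma~\ref{PolandLemma}. Second, your feeler bound (``at most one wide corner'') is, as you say, the crux, and the sketch you give does not yet pin it down: two co-oriented $(m+\varepsilon)$-arcs at a wide corner can in principle both land on the single interior vertex $w_0$, so ruling out two wide corners needs a sharper count at $w_0$ together with the orientation constraints and, crucially, the local-minimality hypothesis (which in \cite{ChartAppIII} is used to normalize crossings along $\partial D$ and to exclude certain interior patterns, not merely to ``delete inessential crossings''). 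Until these three points are handled with the same care as in Lemma~\ref{TwoAngledDisk}'s proof, the plan is an outline rather than a proof.
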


\begin{lemma}$($Triangle Lemma$)$
{\rm (\cite[Lemma 8.3]{ChartApp4})}
\LABEL{LemmaTriangle}
For a minimal chart $\Gamma$, 
if there exists a $3$-angled disk $D_1$ of $\Gamma_m$ without feelers in a disk $D$ as shown in Fig.~\ref{figTriangleLemma},
then $w(\Gamma\cap${\rm Int}$D_1)\ge1$.
\end{lemma}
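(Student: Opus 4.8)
The plan is to prove Triangle Lemma (Lemma~\ref{LemmaTriangle}) by contradiction: assume that the $3$-angled disk $D_1$ of $\Gamma_m$ inside the indicated disk $D$ satisfies $w(\Gamma\cap\mathrm{Int}\,D_1)=0$, and derive a contradiction with the minimality of $\Gamma$. First I would read off from Figure~\ref{figTriangleLemma} precisely which pseudo chart the hypothesis prescribes in $D$: the disk $D$ carries the three edges of $\Gamma_m$ bounding $D_1$ together with the three other edges of $\Gamma_m$ emanating from the three white vertices of $\partial D_1$ and their neighbouring arcs of label $m\pm1$, arranged so that $D_1$ has no feelers. Since $D_1$ has no feelers and contains no white vertex in its interior, Lemma~\ref{3AngledDiskWithoutFeeler} applies: a regular neighbourhood of $D_1$ must contain an element of the RO-family of one of the two pseudo charts in Fig.~\ref{fig3AngledDisk}(a) or (b).

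Next I would combine this normal form for $D_1$ with the extra structure imposed on the surrounding disk $D$ in Fig.~\ref{figTriangleLemma}. The point is that the outward edges of label $m$ at the three white vertices of $\partial D_1$, together with the orientation data forced by condition~(iii) of the definition of a chart, cannot be made compatible with \emph{both} of the configurations (a) and (b) of Fig.~\ref{fig3AngledDisk} and the configuration drawn in Fig.~\ref{figTriangleLemma}. Concretely, in each case the normal form creates, inside $D$, either a lens of type $(m,m\pm1)$ or a $2$-angled disk of $\Gamma_m$ without feelers whose interior contains no white vertex; the first is forbidden by Lemma~\ref{PolandLemma}(a), the second by Lemma~\ref{TwoAngledDisk}(a). (Alternatively, one of the three outer edges of label $m$ becomes a terminal edge carrying no middle arc, contradicting Assumption~\ref{NoTerminal}, or one becomes a free edge, contradicting Assumption~\ref{NoFreeEdge}.) In either case we reach a contradiction, so $w(\Gamma\cap\mathrm{Int}\,D_1)\ge1$.

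The main obstacle is bookkeeping rather than any single hard idea: one must carefully enumerate the elements of the RO-families in Fig.~\ref{fig3AngledDisk}(a) and (b) — there are at most eight pseudo charts to check after the reflection/orientation symmetries of the $3$-angled disk are accounted for — and for each of them trace how the three outward edges of label $m$ and the middle arcs at the six relevant white vertices must be glued to the pattern fixed in Fig.~\ref{figTriangleLemma}. The orientation condition~(iii) at a white vertex and the constraint from Assumption~\ref{NoTerminal} that every terminal edge of $\Gamma_m$ carries a middle arc are the tools that rule out most branches immediately; the few surviving branches each produce an explicit lens or a feeler-free $2$-angled disk with empty interior, to which Lemma~\ref{PolandLemma}(a) or Lemma~\ref{TwoAngledDisk}(a) applies. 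I would organize the write-up as a short case analysis indexed by which of (a), (b) occurs and by the position of the feeler-free edge whose other endpoint lies on $\partial D$, so that each case closes in one or two lines.
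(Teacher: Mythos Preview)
The paper does not supply its own proof of this lemma: it is quoted as \cite[Lemma~8.3]{ChartApp4} and no argument appears in the present paper, so there is nothing here to compare your proposal against directly.

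That said, your strategy is the natural one and is entirely in the spirit of how the surrounding results in this paper (and in the series \cite{ChartApp1,ChartAppII,ChartAppIII,ChartApp4}) are proved: assume $w(\Gamma\cap\mathrm{Int}\,D_1)=0$, invoke Lemma~\ref{3AngledDiskWithoutFeeler} to reduce the interior of $D_1$ to one of the two normal forms in Fig.~\ref{fig3AngledDisk}(a),(b), and then run a finite case check showing that each member of those RO-families clashes with the ambient pseudo chart fixed by Fig.~\ref{figTriangleLemma}. The obstructions you name (Lemma~\ref{PolandLemma} for lenses, Assumption~\ref{NoTerminal} for non-middle terminal edges, a $2$-angled disk forbidden by Lemma~\ref{TwoAngledDisk}) are precisely the standard ones in this line of work. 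One small caution: Lemma~\ref{TwoAngledDisk}(a) carries the extra hypothesis that $\partial D$ is oriented coherently (clockwise or anticlockwise), so in any branch where you want to invoke it you must verify that orientation condition; the complementary orientation pattern is handled instead by Lemma~\ref{TwoAngledDiskWithoutFeeler}. Since the actual argument lives in \cite{ChartApp4} I cannot certify that your case enumeration matches theirs, but nothing in your outline is visibly wrong.
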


\begin{figure}[ht]
\centerline{\includegraphics{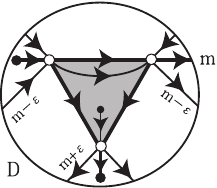}}
\vspace{5mm}
\caption{\label{figTriangleLemma}
The gray region is a 3-angled disks $D_1$. 
The thick lines are edges of label $m$,
and $\varepsilon\in\{+1,-1\}$.}
\end{figure}


\section{{\large IO-Calculation}}
\label{s:IOC}

Let $e$ be an edge or a simple arc.
Suppose that $e$ is not a loop, a hoop nor a ring.
Let $v,v'$ be the two end points of $e$.
Then we denote
$$\partial e=\{v,v'\}, \ \ {\rm Int}e=e-\partial e.$$

Let $\Gamma$ be a chart,
 and $v$ a vertex. 
Let $\alpha$ be a short arc of $\Gamma$ in a small neighborhood of $v$ with $v\in \partial \alpha$. 
If the arc $\alpha$ is oriented to $v$, then $\alpha$ is called {\it an inward arc}, 
and otherwise $\alpha$ is called {\it an outward arc}. 
 
Let $\Gamma$ be an $n$-chart. 
Let $F$ be a closed domain with $\partial F\subset \Gamma_{k-1}\cup\Gamma_{k}\cup \Gamma_{k+1}$ for some label $k$ of $\Gamma$, where $\Gamma_0=\emptyset$ and $\Gamma_{n}=\emptyset$. 
By Condition (iii) for charts,
in a small neighborhood of each white vertex, there are three inward arcs and three outward arcs.
Also in a small neighborhood of each black vertex, there exists only one inward arc or one outward arc.
We often use the following fact, 
when we fix (inward or outward) arcs 
near white vertices and black vertices: 
\begin{enumerate}
\item[]
{\it The number of inward arcs contained in $F\cap \Gamma_k$ is equal to the number of outward arcs in $F\cap \Gamma_k$.
}
\end{enumerate}
When we use this fact, 
we say that we use {\it IO-Calculation with respect to $\Gamma_k$ in $F$}.
For example, in a minimal chart $\Gamma$ of type $(m;3,3)$, 
consider the pseudo chart as shown in 
Fig.~\ref{figIOC}(a) and
\begin{enumerate} 
\item[(1)] $D_1$ is the 2-angled disk of $\Gamma_m$
with $w(\Gamma\cap{\rm Int}D_1)=1$,
\item[(2)] $D_3$ is the 3-angled disk of $\Gamma_m$
with $w(\Gamma\cap{\rm Int}D_3)=0$.
\end{enumerate}
Let $e$ be the internal edge of label $m$ oriented from $w_3$ to $w_2$.
Let $F$ be the 3-angled disk of $\Gamma_{m+1}$ containing $e$ with $\partial F\ni w_2,w_3,w_4$,
and $e_4$ the terminal edge of label $m+1$ containing $w_4$.
Then we can show that
$e_4\not\subset F$.
For if not,
then $e_4\subset F$ (see Fig.~\ref{figIOC}(b)).
Hence in $F$
there are two edges of label ${m+2}$ 
containing $w_4$
but not middle at $w_4$.
By Assumption~\ref{NoTerminal}
neither two edges are terminal edges.
Since $w(\Gamma\cap{\rm Int}F)=0$,
the number of inward arcs in $F\cap \Gamma_{m+2}$ is two,  
but the number of outward arcs in $F\cap \Gamma_{m+2}$ is zero. 
This is a contradiction. 
Instead of the above argument, 
we just say that 
\begin{enumerate}
\item[(3)]
{\it $e_4\not\subset F$
by IO-Calculation with respect to $\Gamma_{m+2}$ in $F$.}
\end{enumerate}

\begin{figure}[ht]
\centerline{\includegraphics{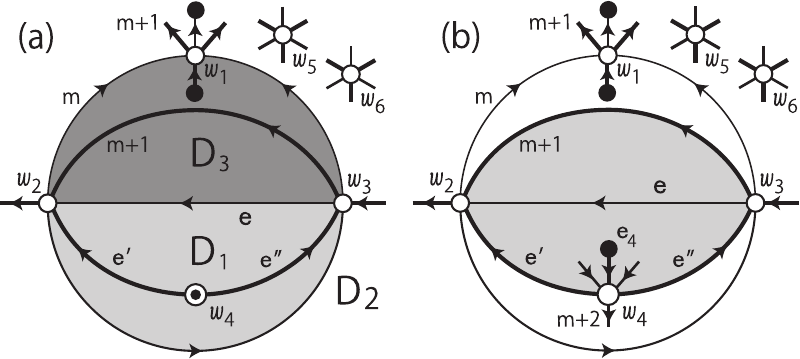}}
\vspace{5mm}
\caption{\label{figIOC} The thick lines are edges of label $m+1$.
(a) The light gray region is the disk $D_1$,
the dark gray region is the disk $D_3$.
(b) The gray region is the disk $F$.}
\end{figure}


\section{{\large C-I-M3 moves and C-I-M4 moves}}
\label{s:M3M4move}

Let $\Gamma$ be a chart, and $D$ a disk.
Let $\alpha$ be a simple arc in $\partial D$,
and $\gamma$ a simple arc in an internal edge of label $k$.
We call the simple arc $\gamma$
a {\it {$(D,\alpha)$-arc}} of label $k$
provided that 
$\partial \gamma \subset $Int$\alpha$
and
Int$\gamma\subset $Int$D$. 
If there is no $(D,\alpha)$-arc in $\Gamma$,
then the chart $\Gamma$ is said to be
$(D,\alpha)$-{\it arc free}.

\begin{lemma}
{\em (cf. \cite[Lemma 3.2]{ChartApp1})} 
$($New Disk Lemma$)$
\LABEL{NewDiskLemma}
Let $\Gamma$ be a chart and
$D$ a disk 
whose interior does not contain 
a white vertex nor a black vertex of $\Gamma$.
Let $\alpha$ be a simple arc in $\partial D$ 
such that ${\rm Int}\alpha$ does not contain 
a white vertex nor a black vertex of $\Gamma$.
Let $V$ be a regular neighborhood of $\alpha$. 
Suppose that the arc $\alpha$ satisfies 
one of the following two conditions:
\begin{enumerate}
\item[{\rm (a)}] 
The arc $\alpha$ is contained in 
an internal edge of some label $k$ of $\Gamma$.
\item[{\rm (b)}] 
$\Gamma\cap\partial \alpha=\emptyset$
and 
if an edge of $\Gamma$ intersects the arc $\alpha$,
then the edge transversely intersects the arc $\alpha$.
\end{enumerate}
Then by applying C-I-M2 moves, C-I-R2 moves, 
and C-I-R3 moves in $V$, 
there exists 
a $(D,\alpha)$-arc free chart $\Gamma'$ 
obtained from the chart $\Gamma$ 
keeping $\alpha$ fixed 
$($see Fig.~\ref{figNewDiskLemma}$)$.
\end{lemma}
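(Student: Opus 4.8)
The plan is to prove the lemma by induction on the number $p$ of $(D,\alpha)$-arcs of $\Gamma$, removing one $(D,\alpha)$-arc at each step by an innermost-disk argument, so that each step strictly decreases $p$ and uses only moves of the listed types supported in $V$. If $p=0$ we take $\Gamma'=\Gamma$, so assume $p\ge1$.

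First I would record two elementary facts. No edge of $\Gamma$ contains a vertex in its interior, so a $(D,\alpha)$-arc $\gamma$, being a simple arc lying in one edge, contains no crossing of $\Gamma$ in its interior; together with the hypothesis that ${\rm Int}\,D$ contains neither a white vertex nor a black vertex, this forces any two distinct $(D,\alpha)$-arcs to be disjoint except possibly at a common endpoint on $\alpha$, and (since two edges of the same label never cross) it forces the label $\ell$ of $\gamma$'s edge to satisfy $|\ell-k|>1$ in case~(a), where $k$ is the label of the edge of $\Gamma_k$ containing $\alpha$. Hence the subarcs $\alpha_\gamma\subset\alpha$ with $\partial\alpha_\gamma=\partial\gamma$, as $\gamma$ ranges over the $(D,\alpha)$-arcs, are pairwise nested or disjoint (the four endpoints of two disjoint properly embedded arcs in a disk do not interleave along the boundary). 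Choose a $(D,\alpha)$-arc $\gamma$ for which the subdisk $D_\gamma\subset D$ bounded by $\gamma\cup\alpha_\gamma$ is minimal under inclusion; a short check shows that then $D_\gamma$ contains no other $(D,\alpha)$-arc and that ${\rm Int}\,\alpha_\gamma$ contains no endpoint of another $(D,\alpha)$-arc.

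Next I would show $D_\gamma$ is \emph{clean} and use this to remove $\gamma$. An arc of $\Gamma$ entering ${\rm Int}\,D_\gamma$ cannot end there (no white or black vertex), cannot leave through ${\rm Int}\,\gamma$ (no crossing there), and cannot leave through ${\rm Int}\,\alpha_\gamma$ or through an endpoint of $\gamma$ without producing a $(D,\alpha)$-arc contained in $D_\gamma$ or with an endpoint in ${\rm Int}\,\alpha_\gamma$ --- both excluded by the choice of $\gamma$; likewise $\Gamma$ meets ${\rm Int}\,\alpha_\gamma$ only along the edge of $\Gamma_k$ containing $\alpha$ in case~(a), and not at all in case~(b). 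Hence ${\rm Int}\,D_\gamma\cap\Gamma$ consists only of closed edges of $\Gamma$, which for a minimal chart Assumption~\ref{AssumptionRing} excludes (such a closed edge would have a complementary domain inside ${\rm Int}\,D$, hence with no white vertex); for a general chart an extra easy argument disposes of them, so we may assume ${\rm Int}\,D_\gamma\cap\Gamma=\emptyset$. Now a preliminary isotopy, legitimate by this cleanness and keeping $\alpha$ fixed, combs $\gamma$ into $V$; the C-I-M2 and C-I-R3 moves are used here, to carry $\gamma$ past crossings of $\Gamma$ lying in $V$ and past white vertices near the endpoints of $\alpha$. Finally, in case~(b), ${\rm Int}\,\alpha_\gamma$ is disjoint from $\Gamma$, so a further isotopy in $V$ pushes $\gamma$ across $\alpha_\gamma$ to the side of $\alpha$ away from $D$ and $\gamma$ is no longer a $(D,\alpha)$-arc; in case~(a), $D_\gamma$ is an empty bigon bounded by an arc of a label-$\ell$ edge and an arc of the label-$k$ edge containing $\alpha$, so, using $|\ell-k|>1$, a C-I-R2 move in $V$ pushes $\gamma$ across $\alpha_\gamma$ and deletes the two crossings $\partial\gamma$ without moving the edge of $\Gamma_k$ containing $\alpha$. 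Either way the resulting chart $\Gamma'$ has $p-1$ $(D,\alpha)$-arcs and is obtained from $\Gamma$ by the admissible moves in $V$ keeping $\alpha$ fixed, and the inductive hypothesis applied to $\Gamma'$ finishes the proof.

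The step I expect to be the main obstacle is the bookkeeping that keeps every move supported in $V$ while fixing $\alpha$: a $(D,\alpha)$-arc may run far into $D$, so one must exploit the cleanness of $D_\gamma$ to comb the finger of an innermost $\gamma$ into $V$ before the final reduction, and one must verify in case~(a) that the bigon truly admits a C-I-R2 move --- in particular that no vertex or crossing of $\Gamma$ obstructs it and that $|\ell-k|>1$. Secondary points requiring care are the innermost argument when several $(D,\alpha)$-arcs share an endpoint on $\alpha$ or cut off the same subarc of $\alpha$, and the routine treatment of closed edges of $\Gamma$ in ${\rm Int}\,D$ for a general (non-minimal) chart.
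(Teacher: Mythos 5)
Your overall strategy---induction on the number of $(D,\alpha)$-arcs and removal of an innermost one by moves supported in $V$---is the same as the paper's, but the two ``elementary facts'' you record at the outset are false, and the cleanness claim built on them is a genuine gap. A $(D,\alpha)$-arc lies in an edge of $\Gamma_k$, i.e.\ an edge of the \emph{subgraph} $\Gamma_k$; by the convention of Section~\ref{s:Prel} such an edge is obtained by deleting only white vertices, so it may pass through crossings of $\Gamma$. Moreover the hypothesis on $D$ excludes white and black vertices from ${\rm Int}D$ but not crossings. Hence ${\rm Int}\gamma$ may contain crossings, two $(D,\alpha)$-arcs whose labels differ by more than one may cross each other (so their endpoints may interleave along $\alpha$ and the subarcs $\alpha_\gamma$ need not be pairwise nested or disjoint), and---the decisive point---an edge may enter the innermost bigon $D_\gamma$ through ${\rm Int}\alpha_\gamma$ and leave through ${\rm Int}\gamma$ without creating any $(D,\alpha)$-arc contained in $D_\gamma$. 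Consequently ${\rm Int}D_\gamma\cap\Gamma$ need not consist of closed curves, the bigon need not be empty, and neither the ``combing of $\gamma$ into $V$'' nor the final C-I-R2 push across $\alpha_\gamma$ is justified as written. (An innermost arc in the weaker sense that $D_\gamma$ contains no \emph{entire} other $(D,\alpha)$-arc does exist, e.g.\ by minimizing the area of $D_\gamma$, but that is all one gets.)

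This is precisely the case the paper's proof is organized around: it sets $s$ equal to the number of proper arcs of $D_L$ meeting ${\rm Int}L_\alpha$ transversely, observes that each such arc must also cross $L$, and when $s>0$ first pushes one endpoint of $L$ along $L_\alpha$ past all these intersection points by C-I-R2 and C-I-R3 moves in $V$, reducing to $s=0$; only then does it split the component $\widetilde L$ of $\Gamma_h\cap(D\cup V)$ by a C-I-M2 move and a C-I-R2 move into a ring (or hoop) plus an arc disjoint from $\alpha$, decreasing the count. To repair your argument you would need to insert this clearing step (or an equivalent one) before asserting that $D_\gamma$ is clean. The remaining ingredients of your outline---the label estimate $|\ell-k|>1$ in case (a), and the disposal of closed components in ${\rm Int}D$---are fine.
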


\begin{proof}
{\bf Case (a)}. 
We prove by induction on the number of $(D,\alpha)$-arcs.
Let $n$ be the number of $(D,\alpha)$-arcs. 
If $n=0$, then $\Gamma$ is $(D,\alpha)$-arc free.
Suppose $n>0$.
Then there exists a $(D,\alpha)$-arc $L$ such that
the disk $D_L$ bounded by $L$ and 
an arc $L_\alpha$ in $\alpha$ 
contains no other $(D,\alpha)$-arc. 
Now for a proper arc of $D_L$ if the arc is in an internal edge 
and transversely intersects Int$L_\alpha$, 
then it must intersect the arc $L$.
Let $s$ be the number of such arcs 
transversely intersecting Int$L_\alpha$. 

Let $h$ be the label of $L$, and 
$\widetilde L$ the connected component of 
$\Gamma_h\cap(D\cup V)$ containing the arc $L$. 
If $s>0$, then 
by deforming $\widetilde L$ in $V$
by C-I-R2 moves and C-I-R3 moves, 
we can push an end point of $L$ 
near the other end point of $L$ 
along $L_\alpha$ 
(see Fig.~\ref{figNewDiskLemma})
so that we can assume that $s=0$. 

By applying a C-I-M2 move and 
a C-I-R2 move, 
we can split the arc $\widetilde L$ 
to a ring (or a hoop) $R$ and 
an arc $L'$ to get a new chart $\Gamma'$ 
with $(R\cup L')\cap \alpha=\emptyset$.
Hence by induction we can assume that 
the chart is $(D,\alpha)$-arc free.

Similarly we can show for {\bf Case (b)}.
\end{proof}

\begin{figure}[ht]
\centerline{\includegraphics{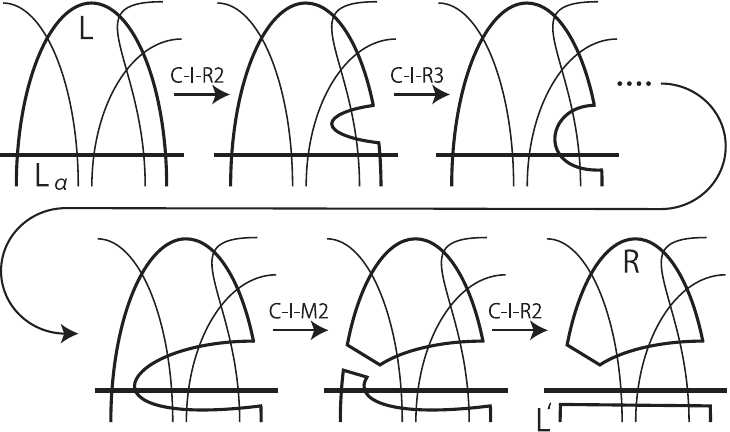}}
\vspace{5mm}
\caption{\label{figNewDiskLemma}}
\end{figure}

\begin{lemma}
{\em (\cite[Lemma 8.2]{ChartAppIII})} 
\LABEL{MM+2Edge}
Let $\Gamma$ be a chart,
$e$ an internal edge of label $m$,
and $w_1,w_2$ the white vertices in $e$.
Suppose $w_1\in\Gamma_{m-1}$ and $w_2\in\Gamma_{m+1}$.
Then for any neighborhood $V$ of the edge $e$, 
there exists a chart $\Gamma'$
obtained from the chart $\Gamma$ by C-I-R2 moves, 
C-I-R3 moves and C-I-R4 moves
in $V$ keeping $\Gamma_{m-1}\cup\Gamma_m\cup\Gamma_{m+1}$ fixed 
such that the edge $e$ does not contain any crossings.
\end{lemma}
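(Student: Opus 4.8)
\emph{Proposal.} The plan is to remove the crossings on $e$ one at a time, by sliding each crossing along $e$ to whichever of the two white vertices $w_1,w_2$ it is allowed to reach and then pushing the offending edge past that white vertex by a $C$-I-R4 move. Since any $C$-move performed inside a smaller neighborhood $V'\subset V$ is in particular performed inside $V$, I may assume $V$ is so thin that $V\cap\Gamma$ consists only of $e$ together with short stubs of the edges of $\Gamma$ crossing $e$ (any other edge of $\Gamma$ lies at positive distance from $e$ and can be excluded by shrinking $V$). Every crossing on $e$ lies on an edge of label $j$ with $|j-m|>1$, hence $j\le m-2$ or $j\ge m+2$. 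Because $w_1\in\Gamma_{m-1}\cap\Gamma_m$ and $w_2\in\Gamma_m\cap\Gamma_{m+1}$, the following label arithmetic holds: an edge of label $j\ge m+2$ differs by at least $2$ from both of the labels $m-1,m$ occurring at $w_1$, so it may be pushed past $w_1$ by a $C$-I-R4 move; dually an edge of label $j\le m-2$ may be pushed past $w_2$. An edge of label $m-2$ cannot be pushed past $w_1$, and one of label $m+2$ cannot be pushed past $w_2$, so the destination of each crossing is forced.

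I would then argue by induction on $c(\Gamma\cap e)$, the number of crossings on $e$; nothing is to be proved when this number is $0$. Suppose $c(\Gamma\cap e)>0$. \textbf{Case 1:} some crossing on $e$ lies on an edge of label $\le m-2$. Among all such crossings let $c$ be the one closest to $w_2$ along $e$, lying on an edge $g$ of label $j\le m-2$. By the choice of $c$, every crossing of $e$ strictly between $c$ and $w_2$ lies on an edge other than $g$ and of label $\ge m+2$, hence of label differing from $j$ by at least $(m+2)-(m-2)=4$. Sliding the point $c$ along $e$ towards $w_2$, each time $c$ meets such an intervening crossing the corresponding portion of $g$ is pushed across the intervening edge by a $C$-I-R2 move (or a $C$-I-R3 move, according to the local configuration), legitimately because the two labels involved differ by at least $2$; and $g$ is never pushed across itself, since no crossing of $g$ with $e$ lies between $c$ and $w_2$. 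Once $c$ has reached a position next to $w_2$ with nothing between it and $w_2$ on $e$, a $C$-I-R4 move pushes $g$ past $w_2$ and destroys $c$. \textbf{Case 2:} no crossing on $e$ lies on an edge of label $\le m-2$; then every crossing on $e$ lies on an edge of label $\ge m+2$. Let $c$ be the crossing closest to $w_1$; nothing lies between $c$ and $w_1$ on $e$, so a single $C$-I-R4 move pushes the edge carrying $c$ past $w_1$ and destroys $c$. In both cases $c(\Gamma\cap e)$ drops by one; since $e$ itself is never moved and every edge that \emph{is} moved has label $\le m-2$ or $\ge m+2$ (hence not $m-1$, $m$ or $m+1$), the subgraph $\Gamma_{m-1}\cup\Gamma_m\cup\Gamma_{m+1}$ is kept fixed throughout, and the induction hypothesis finishes the proof.

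The step I expect to be the real obstacle is the bookkeeping in Case 1: a crossing point on $e$ can be slid past a second one only when the two edges carrying them have labels differing by at least $2$ (otherwise the slide forces a degree-$4$ vertex whose two diagonals have labels differing by at most $1$, which Condition (iv) forbids), and an edge can never be pushed across a copy of itself. The device that makes this go through is precisely the choice of $c$ in Case 1 as the crossing \emph{closest to} $w_2$ among those on edges of label $\le m-2$, and symmetrically in Case 2: it forces every slide actually performed to be between an edge of label $\le m-2$ and one of label $\ge m+2$, whose labels differ by at least $4$, and it ensures $g$ is never slid across itself even when $g$ meets $e$ several times. Everything else is routine: the admissibility of the $C$-I-R4 moves is the label arithmetic recorded above, and $\Gamma_{m-1}\cup\Gamma_m\cup\Gamma_{m+1}$ is untouched because, having shrunk $V$, the only edges in $V$ besides the fixed edge $e$ are the stubs of the edges crossing $e$, all of which have label $\le m-2$ or $\ge m+2$.
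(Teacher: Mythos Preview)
The paper does not supply its own proof of this lemma: it is quoted verbatim from \cite[Lemma~8.2]{ChartAppIII} and immediately followed by the definition of an M4-disk, with no intervening \texttt{proof} environment. So there is nothing in the present paper to compare your argument against.

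That said, your reconstruction is the natural one and is correct. The key observations are exactly those you isolate: every crossing on $e$ has label $j\le m-2$ or $j\ge m+2$; an arc of label $\le m-2$ may legally pass the white vertex $w_2\in\Gamma_m\cap\Gamma_{m+1}$ by a C-I-R4 move but not $w_1$, and dually for labels $\ge m+2$; and the choice of the crossing $c$ closest to the target vertex guarantees that every intermediate slide is a C-I-R3 move between arcs whose labels differ by at least $4$, so all three pairwise crossings involved are admissible and $g$ is never slid across itself. Your remark that shrinking $V$ is harmless because the moves then lie in the original $V$ as well is also correct, and it justifies the claim that $\Gamma_{m-1}\cup\Gamma_m\cup\Gamma_{m+1}$ is untouched. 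The only cosmetic point is that the ``C-I-R2 (or C-I-R3)'' alternative in Case~1 is really always a C-I-R3 move once the triangle is set up; a preliminary C-I-R2 may be needed to create the third side of the triangle when $g$ and the intervening arc do not already cross nearby, but this does not affect the count of crossings on $e$.
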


Let $\Gamma$ be a chart and 
$k$ a label of $\Gamma$.
If a disk $D$ satisfies the following three conditions,
then $D$ is called an 
{\it M4-disk of label $k$}
 (see Fig.~\ref{figM4-disk}(a)).
\begin{enumerate}
\item[(i)] 
$\partial D$ consists of four internal edges 
$e_1,e_2,e_3,e_4$ of label $k$ 
situated on $\partial D$ 
in this order.
\item[(ii)] 
Set $w_1=e_1\cap e_4,w_2=e_1\cap e_2,
w_3=e_2\cap e_3,w_4=e_3\cap e_4$.
Then
\begin{enumerate}
\item[(a)] 
$D\cap\Gamma_{k-1}$ consists of an internal edge $e_5$ 
connecting $w_1$ and $w_3$, 
and
\item[(b)] 
$D\cap\Gamma_{k+1}$ consists of an internal edge $e_6$ 
connecting $w_2$ and $w_4$. 
\end{enumerate}
\item[(iii)] 
Int$D$ does not contain any white vertex.
\end{enumerate}
We call the union 
$X=\cup_{i=1}^6 e_i$ 
the {\it M4-pseudo chart} for the disk $D$, and 
$(w_1,w_2,w_3,w_4;e_1,e_2,e_3,e_4;e_5,e_6)$
the {\it fundamental information} 
for the M4-disk $D$.

\begin{figure}[ht]
\centerline{\includegraphics{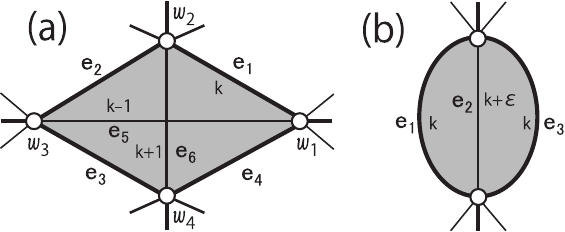}}
\vspace{5mm}
\caption{\label{figM4-disk}
(a) The gray region is the M4-disk. 
(b) The gray region is the M3-disk.}
\end{figure}

\begin{lemma}
\LABEL{M4-diskLemma}
Let $\Gamma$ be a chart, and 
$k$ a label of $\Gamma$. 
Suppose that $D$ is an M4-disk 
of label $k$ 
with an M4-pseudo chart $X$. 
Then by deforming $\Gamma$ 
in a regular neighbourhood of $D$ 
without increasing the complexity of $\Gamma$, 
the chart $\Gamma$ is 
C-move equivalent to a chart $\Gamma'$ 
with
$D\cap(\cup_{i=k-2}^{k+2}\Gamma'_i)=X$.
\end{lemma}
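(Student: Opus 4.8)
The plan is to show that inside a regular neighbourhood of the M4-disk $D$ we may clean out everything that is not part of the prescribed M4-pseudo chart $X$, using only C-moves that do not raise $(w(\Gamma),-f(\Gamma))$. First I would fix notation via the fundamental information $(w_1,w_2,w_3,w_4;e_1,e_2,e_3,e_4;e_5,e_6)$ and observe that by Condition (iii) the open disk $\mathrm{Int}\,D$ contains no white vertex; by definition of the M4-disk the only edges of $\Gamma_{k-1}\cup\Gamma_k\cup\Gamma_{k+1}$ meeting $D$ are the $e_i$'s, so whatever else sits in $\mathrm{Int}\,D$ consists of (a) edges of label $\le k-2$ or $\ge k+2$, (b) black vertices of those labels, and (c) crossings of $\Gamma$ lying on $\partial D$ or in $\mathrm{Int}\,D$. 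A crossing on $\partial D$ involves an edge of label $k,k-1$ or $k+1$ meeting a far-away label; since diagonals at a crossing have the same label, such a diagonal is an arc of label $j$ with $|j-k|\ge 2$ passing transversely through one of the $e_i$'s.

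The key steps, in order: (1) Deal with crossings on the four edges $e_1,\dots,e_4$ of $\Gamma_k$. Each $e_i$ joins two white vertices, one in $\Gamma_{k-1}$ and one in $\Gamma_{k+1}$ (because of the 6-valent structure at $w_1,\dots,w_4$ forced by $e_5\subset\Gamma_{k-1}$ and $e_6\subset\Gamma_{k+1}$), so Lemma~\ref{MM+2Edge} applies and lets us push all crossings off each $e_i$ by C-I-R2, C-I-R3, C-I-R4 moves in a neighbourhood of $e_i$ keeping $\Gamma_{k-1}\cup\Gamma_k\cup\Gamma_{k+1}$ fixed; these moves change neither $w(\Gamma)$ nor $f(\Gamma)$. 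After this $\partial D$ is crossing-free. (2) Now $\partial D$ is a closed curve made of the four clean edges of $\Gamma_k$, and $e_5,e_6$ are two proper arcs of $D$; cut $D$ along $e_5\cup e_6$ (they are disjoint since their endpoints alternate on $\partial D$, and they meet at most at a single interior point, which by a local C-move I may assume to be a crossing or nothing) into sub-disks whose boundaries are arcs of $\Gamma_{k-1}\cup\Gamma_k\cup\Gamma_{k+1}$. (3) Apply the New Disk Lemma (Lemma~\ref{NewDiskLemma}) repeatedly: for each sub-disk $D'$ and each boundary arc $\alpha$ contained in an edge of label $k$, $k-1$, or $k+1$ (case (a) of the lemma), we remove all $(D',\alpha)$-arcs; doing this for a spanning collection of arcs pushes every edge of far-away label, together with its black vertices, out of $\mathrm{Int}\,D$ — such an edge, having no endpoint at a white vertex inside $D$ and no way to exit through $\Gamma_k\cup\Gamma_{k\pm1}$ except as a $(D',\alpha)$-arc, is entirely swept out. (4) Any remaining closed components (hoops, rings) of far-away label inside $D$ are moved out across $\partial D$ by the same C-I-M2/C-I-R2/C-I-R3 repertoire. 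What is left of $\Gamma$ in $D$ is exactly $X=\bigcup_{i=1}^6 e_i$, i.e. $D\cap(\bigcup_{i=k-2}^{k+2}\Gamma'_i)=X$, and none of the moves used increased the complexity.

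The main obstacle I expect is the bookkeeping in step (3)–(4): one has to guarantee that the New Disk Lemma can be invoked with its hypotheses literally satisfied — namely that $\mathrm{Int}\,\alpha$ and $\mathrm{Int}\,D'$ contain no white or black vertex — which forces a careful choice of the arcs $\alpha$ and of the order in which sub-disks are cleared, and one must check that clearing one sub-disk does not re-introduce arcs into an already-cleaned one (this is why the cuts along $e_5,e_6$ and an innermost-disk induction, exactly as in the proof of Lemma~\ref{NewDiskLemma} itself, are used). A secondary subtlety is the possible interior intersection point of $e_5$ and $e_6$: since $|(k-1)-(k+1)|=2>1$, such a point must be a crossing of $\Gamma$, and Condition (ii) of the M4-disk definition together with Condition (iii) (no white vertex inside) already pins down the local picture, so it causes no trouble — but it should be mentioned explicitly. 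Throughout, the crucial point to emphasise is that every C-move employed (C-I-M2, C-I-R2, C-I-R3, C-I-R4, and the crossing-removal of Lemma~\ref{MM+2Edge}) leaves $w(\Gamma)$ unchanged and does not create free edges, so "without increasing the complexity" is automatic.
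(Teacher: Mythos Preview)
Your overall strategy is workable, but it takes a more laborious route than the paper and leaves the hardest part unresolved. The paper's proof rests on a geometric observation you do not use: \emph{each} of the diagonals $e_5$ and $e_6$ separates $e_2$ from $e_4$ inside $D$. Concretely, the paper applies Lemma~\ref{MM+2Edge} only to the two \emph{opposite} edges $e_1,e_3$ (which share no white vertex, so no interference issue arises), and then applies the New Disk Lemma to $D$ itself with $\alpha=e_2$ and with $\alpha=e_4$, making the chart $(D,e_2)$- and $(D,e_4)$-arc free. Now any arc of label $j\ne k$ entering $D$ through $e_2$ cannot return to $e_2$ (arc-free), cannot exit through the crossing-free $e_1,e_3$, so must reach $e_4$; but to do so it has to cross both $e_5$ and $e_6$. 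An edge of label $k-2$ cannot cross $e_5$ (labels differ by $1$), and an edge of label $k+2$ cannot cross $e_6$; hence nothing of label $k\pm2$ meets $e_2\cup e_4$ at all, so $D\cap(\Gamma'_{k-2}\cup\Gamma'_{k+2})$ consists only of simple hoops and rings, which Assumptions~\ref{NoSimpleHoop} and~\ref{AssumptionRing} dispose of. No sub-disk decomposition, no innermost-arc induction, no pushing of hoops across $\partial D$ is needed.

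By contrast, your step (1) cleans all four boundary edges. This is legitimate in principle (each $e_i$ does join a vertex in $\Gamma_{k-1}$ to one in $\Gamma_{k+1}$, so Lemma~\ref{MM+2Edge} applies), but you should say why cleaning $e_2$ via C-I-R4 moves near $w_2,w_3$ does not push crossings back onto the adjacent $e_1,e_3$; the paper sidesteps this entirely by choosing a non-adjacent pair. More importantly, if your step (1) does succeed then nothing of label $\ne k$ crosses $\partial D$, and you could go straight to the hoop/ring conclusion via the Assumptions --- your steps (2)--(4) are superfluous. As written, those steps carry the real gap you yourself flag: the New Disk Lemma requires the interior of the disk to contain no black vertex, and you never establish this for your sub-disks before invoking it, nor do you explain how the ``pushing out'' in step (4) is organised so as not to undo earlier work. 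The paper's separation argument avoids all of this bookkeeping.
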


\begin{proof}
Let 
$(w_1,w_2,w_3,w_4;e_1,e_2,e_3,e_4,e_5,e_6)$
be the fundamental information 
for the M4-disk $D$. 
Then 
\begin{enumerate}
\item[(1)]
Int$D$ does not contain any white vertex.
\end{enumerate}

First of all, applying Lemma~\ref{MM+2Edge} 
for $e_1,e_3$ respectively 
keeping $\Gamma_{k-1}\cup\Gamma_{k}\cup\Gamma_{k+1}$  
fixed, 
we can assume that 
\begin{enumerate}
\item[(2)]
neither $e_1$ nor $e_3$ 
contains any crossing.
\end{enumerate}
Secondly, applying New Disk Lemma (Lemma~\ref{NewDiskLemma}) 
for $e_2,e_4\subset \partial D$ respectively, 
the chart $\Gamma$ is C-move equivalent to 
a chart $\Gamma'$ 
deformed in regular neighbourhoods 
of $e_2$ and $e_4$
so that 
\begin{enumerate}
\item[(3)]
the chart $\Gamma'$ is 
$(D,e_2)$-arc free and 
$(D,e_4)$-arc free. 
\end{enumerate}
By Statement (2) and Statement (3), 
for each label $i\not =k$, 
if an internal edge of label $i$ in $\Gamma'$
intersects the edge $e_2$ (resp. $e_4$), 
then it must intersect the edge $e_4$ (resp. $e_2$). 
Since $e_5$ is of label $k-1$ and 
$e_6$ is of label $k+1$, and
since each of $e_5$ and $e_6$ separates 
the two edges $e_2$ and $e_4$ in the disk $D$, 
any internal edge of label ${k-2}$ or ${k+2}$ in $\Gamma'$ 
does not intersect $e_2\cup e_4$.
Hence Statement (1) implies that 
each connected component of 
$D\cap \Gamma_{k-2}'$ and 
$D\cap \Gamma_{k+2}'$ is 
a simple hoop or a simple ring. 
Thus by Assumption~\ref{NoSimpleHoop} and 
Assumption~\ref{AssumptionRing}, 
we have that 
$D\cap (\Gamma'_{k-2}\cup\Gamma'_{k+2})=\emptyset$. 
Again by Assumption~\ref{NoSimpleHoop} and 
Assumption~\ref{AssumptionRing}, 
we can assume that 
$D\cap 
(\Gamma'_{k-1}\cup\Gamma'_{k}\cup\Gamma'_{k+1})$
does not contain a hoop nor a ring. 
Hence we have $D\cap(\cup_{i=k-2}^{k+2}\Gamma'_i)=X$.
Thus we have the result.
\end{proof}

Let $\Gamma$ be a chart, 
$k$ a label of $\Gamma$,
and $\varepsilon\in\{+1,-1\}$.
A disk $D$ is called an 
{\it M3-disk of type $(k,k+\varepsilon)$}
provided that (see Fig.~\ref{figM4-disk}(b))
\begin{enumerate}
\item[(i)] 
$\partial D$ consists of two internal edges $e_1,e_3$ of label $k$,
\item[(ii)] $D\cap \Gamma_k=\partial D$,
\item[(iii)] 
$D\cap\Gamma_{k+\varepsilon}$ consists of 
an internal edge $e_2$ 
with $\partial e_2=\partial e_1$,
\item[(iv)] 
Int$D$ does not contain 
a white vertex nor a black vertex.
\end{enumerate}
We call the union 
$X=e_1\cup e_2\cup e_3$ 
the {\it M3-pseudo chart} for the disk $D$.

Similarly we can show the following lemma.
\begin{lemma}
\LABEL{M3-diskLemma}
Let $\Gamma$ be a chart, and 
$k$ a label of $\Gamma$. 
Suppose that $D$ is an M3-disk
of type $(k,k+\varepsilon)$ 
with an M3-pseudo chart $X$. 
Then by deforming $\Gamma$ 
in a regular neighbourhood of $D$ 
without increasing the complexity of $\Gamma$, 
the chart $\Gamma$ is 
C-move equivalent to a chart $\Gamma'$ 
with
$D\cap(\Gamma'_{k-\varepsilon}\cup\Gamma_k'\cup \Gamma'_{k+\varepsilon}\cup \Gamma'_{k+2\varepsilon})=X$.
\end{lemma}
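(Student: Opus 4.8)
The plan is to follow the proof of Lemma~\ref{M4-diskLemma} almost verbatim, adapted to the simpler three–edge picture. First I would record the local combinatorics. The two points $w_1,w_2$ with $\{w_1,w_2\}=\partial e_1=\partial e_2=\partial e_3$ are white vertices (not black: three edges of $\Gamma$ of pairwise different labels meet each of them), and each of $w_1,w_2$ carries only the labels $k$ and $k+\varepsilon$. By condition~(iv), $\mathrm{Int}D$ contains no white vertex and no black vertex of $\Gamma$. The chord $e_2$ of label $k+\varepsilon$ cuts $D$ into two sub-bigons $D'$ (bounded by $e_1\cup e_2$) and $D''$ (bounded by $e_2\cup e_3$), and $\mathrm{Int}D',\mathrm{Int}D''\subset\mathrm{Int}D$ so they too contain no white or black vertex. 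The labels we must control are $k-\varepsilon,k,k+\varepsilon,k+2\varepsilon$; conditions~(ii) and~(iii) already give $D\cap\Gamma_k=e_1\cup e_3$ and $D\cap\Gamma_{k+\varepsilon}=e_2$, so only the labels $k-\varepsilon$ and $k+2\varepsilon$ require work.

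Next I would isolate two ``parity'' facts. Since $|(k-\varepsilon)-k|=1$ and $w_1,w_2$ do not carry the label $k-\varepsilon$, no edge of $\Gamma_{k-\varepsilon}$ meets $e_1$ or $e_3$; hence every component of $\Gamma_{k-\varepsilon}\cap D$ is either a closed curve contained in $\mathrm{Int}D$, or a simple arc lying in $D'$ or in $D''$ with both endpoints in $\mathrm{Int}e_2$, that is, a $(D',e_2)$-arc or a $(D'',e_2)$-arc. Symmetrically, since $|(k+2\varepsilon)-(k+\varepsilon)|=1$ and $w_1,w_2$ do not carry the label $k+2\varepsilon$, no edge of $\Gamma_{k+2\varepsilon}$ meets $e_2$; hence every component of $\Gamma_{k+2\varepsilon}\cap D$ is either a closed curve in $\mathrm{Int}D$, or a $(D,e_1)$-arc lying in $D'$, or a $(D,e_3)$-arc lying in $D''$.

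Then I would apply New Disk Lemma (Lemma~\ref{NewDiskLemma}, case~(a)) to the label-$k$ edge $e_1\subset\partial D$, to $e_3\subset\partial D$, and to the label-$(k+\varepsilon)$ edge $e_2$ regarded in turn as a boundary arc of $D'$ and of $D''$; each application is a deformation of $\Gamma$ in a regular neighbourhood of the arc in question that fixes the arc and does not increase the complexity. The result is a C-move equivalent chart $\Gamma'$ which is $(D,e_1)$-arc free, $(D,e_3)$-arc free, $(D',e_2)$-arc free and $(D'',e_2)$-arc free; none of these moves disturbs $e_1\cup e_2\cup e_3$, since the arcs being pushed away all have labels differing from $k$ and from $k+\varepsilon$ by more than $1$. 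By the two parity facts, $\Gamma'_{k-\varepsilon}\cap D$ and $\Gamma'_{k+2\varepsilon}\cap D$ then consist only of closed curves in $\mathrm{Int}D$ (together with the rings and hoops pinched off during the New Disk Lemma process, which also lie in $\mathrm{Int}D$); since $\mathrm{Int}D$ contains no white vertex, each such curve is a simple hoop or a simple ring, so by Assumption~\ref{NoSimpleHoop} and Assumption~\ref{AssumptionRing} we may assume $\Gamma'$ has none of them inside $D$. Discarding likewise any hoop or ring of label $k$ or $k+\varepsilon$ in $D$ (conditions~(ii) and~(iii) forbid anything else of those labels), we obtain $D\cap(\Gamma'_{k-\varepsilon}\cup\Gamma'_k\cup\Gamma'_{k+\varepsilon}\cup\Gamma'_{k+2\varepsilon})=(e_1\cup e_3)\cup e_2=X$, as required.

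The point where this departs from the proof of Lemma~\ref{M4-diskLemma}, and the step I expect to be the main obstacle, is that Lemma~\ref{MM+2Edge} is unavailable here: all of $e_1,e_2,e_3$ join the same pair of white vertices $w_1,w_2$, each of the single type $\{k,k+\varepsilon\}$, so one cannot clear crossings from $e_1$ or $e_3$ the way one does in the M4-disk case. Routing everything through New Disk Lemma instead — applied to the boundary edges and to $e_2$ inside each of the two halves $D'$, $D''$ — avoids this, but it requires some care: the two applications to $e_2$ (from the $D'$ side and from the $D''$ side) must be carried out so that the second does not reintroduce $(D',e_2)$-arcs, which is arranged by pushing each pinched-off ring or hoop strictly into the half it came from before invoking Assumptions~\ref{NoSimpleHoop} and~\ref{AssumptionRing}. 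Apart from this bookkeeping the argument is routine.
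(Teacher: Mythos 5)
The paper gives no written proof of this lemma beyond the remark that it follows ``similarly'' to Lemma~\ref{M4-diskLemma}, and your argument is a correct instantiation of that: applying New Disk Lemma to $e_1$, $e_3$ and to $e_2$ from within each half $D'$, $D''$ eliminates every arc component of labels $k-\varepsilon$ and $k+2\varepsilon$ in $D$, leaving only simple hoops and rings that Assumptions~\ref{NoSimpleHoop} and~\ref{AssumptionRing} remove, and your observation that Lemma~\ref{MM+2Edge} is inapplicable here (both endpoints of each $e_i$ have type $\{k,k+\varepsilon\}$) and must be replaced by the extra New Disk Lemma applications is exactly the adaptation the ``similarly'' conceals. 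The only imprecision is in your first parity fact: a component of $\Gamma_{k-\varepsilon}\cap D$ may cross $e_2$ transversally in its interior and so need not lie in a single half, but cutting it along $e_2$ into $(D',e_2)$- and $(D'',e_2)$-arcs restores the claim and the rest of your argument is unaffected.
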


\section{{\large Non-minimal charts with six white vertices}}
\label{s:NoMinimalSix}

\begin{lemma}
\LABEL{NoMinimalChartSix}
Let $\Gamma$ be a chart with $w(\Gamma)=6$.
If $\Gamma$ contains one of the three subcharts as shown in 
Fig.~\ref{figNoMinimalChartSix}{\rm (a), (b)} and {\rm (c)},
then $\Gamma$ is not minimal.
\end{lemma}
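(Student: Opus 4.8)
The plan is to show that each of the three subcharts in Figure~\ref{figNoMinimalChartSix} creates a configuration that can be simplified by C-moves so as to decrease the complexity $(w(\Gamma),-f(\Gamma))$, contradicting minimality; hence a chart realizing one of these subcharts cannot be minimal. Since $w(\Gamma)=6$ is small, the ambient structure is highly constrained, and the strategy is to use the lemmata already developed — the lens lemma (Lemma~\ref{PolandLemma}), the loop lemmata (Lemma~\ref{LoopTwoVertices}, Lemma~\ref{LoopSixVertices}), the $2$- and $3$-angled disk lemmata (Lemma~\ref{TwoAngledDiskWithoutFeeler}, Lemma~\ref{TwoAngledDisk}, Lemma~\ref{3AngledDiskWithoutFeeler}, Lemma~\ref{Theorem3AngledDisk}, Lemma~\ref{LemmaTriangle}), and the M3/M4-disk machinery (Lemma~\ref{M3-diskLemma}, Lemma~\ref{M4-diskLemma}) — to pin down $\Gamma$ near the given subchart and then exhibit an explicit reduction.

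First I would argue by contradiction: assume $\Gamma$ is minimal and contains one of the subcharts (a), (b), (c), so Assumptions~\ref{NoTerminal}--\ref{AssumptionRing} all apply. For each of the three cases I would identify the small disk supporting the subchart and look at the edges of $\Gamma_m$ (and $\Gamma_{m\pm1}$) emanating from its white vertices. Using IO-Calculation (Section~\ref{s:IOC}) with respect to the relevant labels on the complementary disks, together with Lemma~\ref{TwoThreeLoop} on connected components of $\Gamma_m$, I would determine how the outgoing edges must close up — typically forcing a lens, a loop with an empty associated disk, or a $2$-angled disk with no interior white vertex. Each of these is forbidden (Lemma~\ref{PolandLemma}(a), Lemma~\ref{LoopTwoVertices}, Lemma~\ref{TwoAngledDisk}(a)) \emph{unless} there are enough white vertices available, and with only six white vertices total — three already used in the subchart — the counting is tight enough to rule out the surviving possibilities or to force a very specific global picture. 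In that specific picture I would then produce an M3-disk or M4-disk (or a $2$-angled disk meeting the hypothesis of Lemma~\ref{TwoAngledDiskWithoutFeeler}) and apply Lemma~\ref{M3-diskLemma} or Lemma~\ref{M4-diskLemma} to strip away crossings and clear the disk, after which a C-I-M2 / C-I-M3 / C-I-M4 move (or a sequence reducing a lens as in Lemma~\ref{PolandLemma}) lowers $w(\Gamma)$ — the desired contradiction.

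The main obstacle I anticipate is the case analysis on how the free ends of the subchart reconnect to the rest of $\Gamma$: a priori the three subchart vertices could connect to the other three white vertices in several combinatorial patterns, and for each pattern one must check whether a reducing C-move is available or whether some assumption (e.g. Assumption~\ref{NoTerminal} on terminal edges, or Assumption~\ref{AssumptionRing} on rings) is violated outright. I expect that most patterns die quickly to Lemma~\ref{PolandLemma}(b) (no lens when $w(\Gamma)\le7$) or to Lemma~\ref{LoopSixVertices} (a loop forces type $(2,4)$ or $(4,2)$, which is incompatible with the local label data of the subcharts), leaving one or two genuinely delicate configurations where the explicit M4-disk reduction has to be carried out. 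I would handle those by drawing the pseudo chart on a regular neighborhood, invoking Lemma~\ref{M4-diskLemma} to normalize it to its M4-pseudo chart, and then citing the standard C-I-M4 (or C-I-M3) reduction that removes a pair of white vertices; by Assumption~\ref{NoSimpleHoop} the hoops/rings created in the process are pushed to $U_\infty$, so the net effect is strictly fewer white vertices, contradicting minimality. This completes all three cases and hence the lemma.
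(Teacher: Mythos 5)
Your proposal never actually carries out the reduction for any of the three subcharts; it is a search strategy rather than a proof, and the one concrete mechanism you name for the final step is wrong. C-I-M2, C-I-M3 and C-I-M4 moves preserve the number of white vertices, so the claim that ``a C-I-M2 / C-I-M3 / C-I-M4 move \dots lowers $w(\Gamma)$'' cannot supply the contradiction. In the paper the three cases are closed by three different devices, none of which is ``an M-move lowers $w$'': for (a), after normalizing the $4$-angled disk on $w_2,w_3,w_4,w_6$ via Lemma~\ref{M4-diskLemma} and a C-I-M4 move, a terminal edge becomes non-middle at $w_2$ and a \emph{C-III move} eliminates that white vertex; for (b), no white vertex is removed at all --- one joins the black vertices of the two terminal edges $e^*,e^{**}$ inside the $6$-angled disk by C-II moves and a C-I-M2 move to create a \emph{new free edge}, which lowers the second coordinate $-f(\Gamma)$ of the complexity at fixed $w(\Gamma)$; for (c), an explicit sequence (C-II moves, a C-III move, a C-I-M4 move, a C-III move) produces a chart with six white vertices containing a \emph{lens}, contradicting Lemma~\ref{PolandLemma}(b). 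Your outline identifies none of these three exit points, and in particular misses that case (b) is a free-edge count argument, not a white-vertex count argument.

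A secondary misreading: you describe the ``main obstacle'' as a case analysis over how the three subchart vertices reconnect to ``the other three white vertices.'' Each of the subcharts (a), (b), (c) already contains all six white vertices of $\Gamma$, so there is no such reconnection analysis to perform; the substance of the proof lies entirely in exhibiting the explicit C-move sequences above and, for (c), in first verifying via Lemma~\ref{MM+2Edge} together with Assumptions~\ref{NoSimpleHoop} and \ref{AssumptionRing} that the relevant disk meets only the prescribed pseudo chart, so that the M4-machinery applies. As written, the proposal does not close any of the three cases.
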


\begin{figure}[ht]
\centerline{\includegraphics{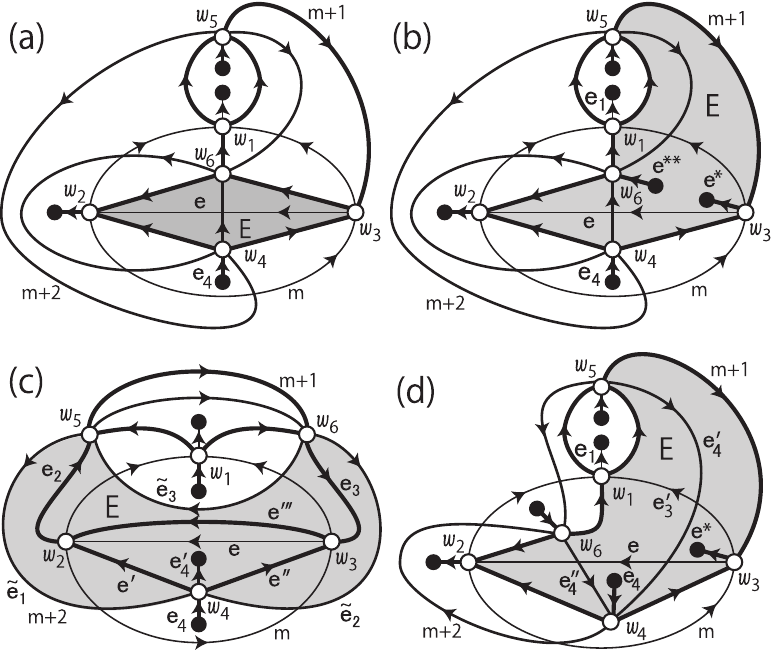}}
\vspace{5mm}
\caption{\label{figNoMinimalChartSix}
The thick lines are edges of label $m+1$.
(a) The gray region is the 4-angled disk $E$.
(b) The gray region is the 6-angled disk $E$.
(c) The gray region is the 3-angled disk $E$.
(d) The gray region is the 6-angled disk $E$.
}
\end{figure}

\begin{proof}
Suppose that $\Gamma$ contains the subchart as shown in 
Fig.~\ref{figNoMinimalChartSix}(a).
We use notations in 
Fig.~\ref{figNoMinimalChartSix}(a).
Let $E$ be the 4-angled disk 
containing the internal edge $e$ with 
$\partial E\ni w_{2},w_{3},w_{4},w_{6}$.
Then the disk is an M4-disk 
of label $m+1$.
Then by Lemma~\ref{M4-diskLemma},
we can apply a C-I-M4 move 
in a neighborhood of $E$,
we obtain a chart 
containing a subchart 
as shown in 
Fig.~\ref{figNoMinimalChartSixProof}(a).
The terminal edge containing $w_2$ 
is not middle at $w_2$.
Applying a C-III move,
we can eliminate the white vertex $w_2$.
Hence $\Gamma$ is not minimal.

Suppose that 
$\Gamma$ contains the subchart as shown in 
Fig.~\ref{figNoMinimalChartSix}(b).
We use notations in 
Fig.~\ref{figNoMinimalChartSix}(b),
here $e^*,e^{**}$ are the terminal edges 
of label $m+1$
containing $w_3,w_6$ respectively, 
$e_1$ the terminal edge of label $m$ 
containing $w_1$.
Let $E$ be the 6-angled disk of $\Gamma_{m+1}$ 
with $E\supset e^*\cup e^{**}$ but $E\not\supset e_1$, 
shown in Fig.~\ref{figNoMinimalChartSix}(b).
By Assumption~\ref{NoSimpleHoop} 
and Assumption~\ref{AssumptionRing},
we can assume that 
$E$ does not contain 
any free edges, hoops nor rings of label $m,m+1,m+2$.
Let $\gamma$ be a simple arc
connecting two black vertices in $e^*$ and $e^{**}$
with
$(\Gamma_m\cup\Gamma_{m+1}\cup\Gamma_{m+2})
\cap{\rm Int}\gamma=\emptyset$.
Applying C-II moves and a C-I-M2 move along $\gamma$,
we obtain a new free edge of label $m+1$.
Hence $\Gamma$ is not minimal.

Suppose that 
$\Gamma$ contains the subchart 
as shown in 
Fig.~\ref{figNoMinimalChartSix}(c).
We use notations in 
Fig.~\ref{figNoMinimalChartSix}(c), 
here
$e_4'$ is the terminal edge 
of label $m+2$ containing $w_4$, 
and 
 $e_2,e_3, e', e'', e'''$ are 
the internal edges of label ${m+1}$ with 
$\partial e_2=\{ w_2,w_5\}$, 
$\partial e_3=\{ w_3,w_6\}$, 
$\partial e'=\{ w_2,w_4\}$, 
$\partial e''=\{ w_3,w_4\}$ and 
$\partial e'''=\{ w_2,w_3\}$. 
Further $\widetilde{e}_1,\widetilde{e}_2,\widetilde{e}_3$
are the internal edges of label ${m+2}$
such that $\partial \widetilde{e}_1=\{ w_4,w_5\}$,
$\partial \widetilde{e}_2=\{ w_4,w_6\}$, 
and $\widetilde{e}_3$ is oriented from $w_6$ to $w_5$.

Let $E$ be the 3-angled disk of $\Gamma_{m+2}$
bounded by $\widetilde{e}_1,\widetilde{e}_2,\widetilde{e}_3$ with $E\ni w_2,w_3$. 
Set $X=e'\cup e''\cup e'''\cup e_2\cup e_3\cup e_4'\cup \partial E$.

Now 
$(\Gamma_{m+1}\cup\Gamma_{m+2}\cup\Gamma_{m+3})\cap E=X$. 
For, by applying Lemma~\ref{MM+2Edge} to 
$e_2,e_3$ respectively,
we can assume that 
\begin{enumerate}
\item[(1)] 
neither $e_2$ nor $e_3$ contains a crossing.
\end{enumerate}
Now $\partial E\subset\Gamma_{m+2}$
implies that 
the closure of 
$(\Gamma_{m+1}\cup\Gamma_{m+2}\cup\Gamma_{m+3})\cap E-X$ 
consists of hoops, and rings.
In the disk $E$ 
each of the hoops and the rings is simple
by Statement (1). 
Hence by Assumption~\ref{NoSimpleHoop} 
and Assumption~\ref{AssumptionRing}, 
we have that 
$(\Gamma_{m+1}\cup\Gamma_{m+2}\cup\Gamma_{m+3})
\cap E=X$.

Applying C-II moves,
we can shift the black vertex 
of the terminal edge $e'_4$ 
near the edge $e'''$.
Applying a C-III move,
we obtain the chart as shown 
in Fig.~\ref{figNoMinimalChartSixProof}(b).

Let $F$ be the 4-angled disk of $\Gamma_{m+1}$ 
as shown in Fig.~\ref{figNoMinimalChartSixProof}(b).
Then $F$ is an M4-disk of label $m+1$.
By Lemma~\ref{M4-diskLemma},
we can apply a C-I-M4 move for $F$ 
so that 
we obtain the chart as shown in 
Fig.~\ref{figNoMinimalChartSixProof}(c).
The terminal edge of label $m+1$ containing $w_4$
is not middle at $w_4$.
Thus by a C-III move,
we obtain a minimal chart 
with six white vertices and 
a lens.
This contradicts Lemma~\ref{PolandLemma}(b).
Therefore $\Gamma$ is not minimal.
\end{proof}

\begin{figure}[ht]
\centerline{\includegraphics{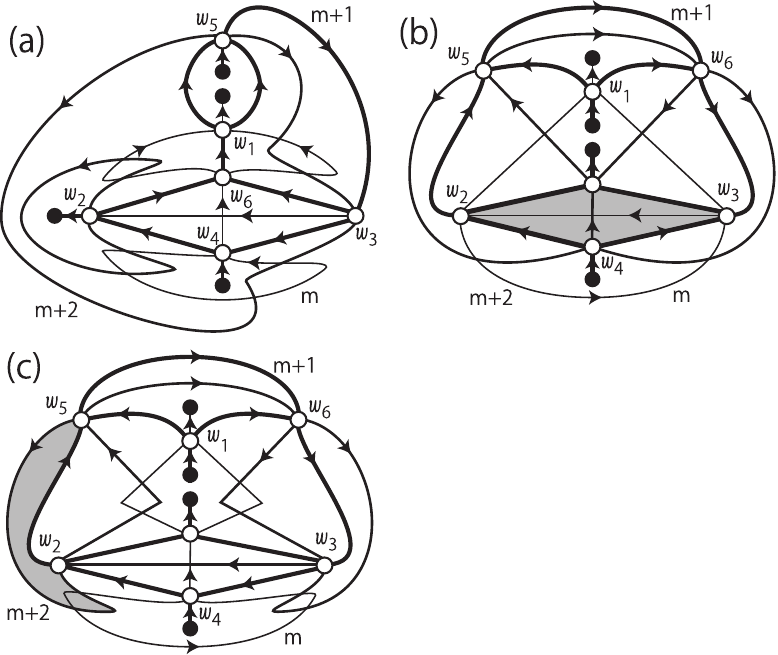}}
\vspace{5mm}
\caption{\label{figNoMinimalChartSixProof}
The thick lines are edges of label $m+1$.
(b) The gray region is the 4-angled disk $F$.
(c) The gray region is a lens of type $(m+1,m+2)$.}
\end{figure}

\begin{lemma}
\LABEL{MinimalChartSix}
Let $\Gamma$ be a minimal chart with $w(\Gamma)=6$.
If $\Gamma$ contains the subchart as shown in 
Fig.~\ref{figNoMinimalChartSix}{\rm (d)},
then $\Gamma$ is C-move equivalent to a minimal chart containing a subchart in the lor-family of 
the $2$-twist spun trefoil.
\end{lemma}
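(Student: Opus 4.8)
The plan is to analyse the pseudo chart of Fig.~\ref{figNoMinimalChartSix}(d) directly, exploiting the fact that $w(\Gamma)=6$ forces the six white vertices displayed there to be \emph{all} the white vertices of $\Gamma$. First I would fix notation as in Fig.~\ref{figNoMinimalChartSix}(d): label the six white vertices $w_1,\dots,w_6$, with $w_1\in\Gamma_m$ carrying a terminal edge $e_1$ of label $m$, and with the 6-angled disk $E$ of $\Gamma_{m+1}$ picked out so that $E$ contains the two terminal edges of label $m+1$ at $w_3,w_6$ but not $e_1$. As in the proof of Lemma~\ref{NoMinimalChartSix}(b), Assumption~\ref{NoSimpleHoop} and Assumption~\ref{AssumptionRing} let me clear $E$ of any free edges, hoops and rings of labels $m,m+1,m+2$, so that inside $E$ the chart is completely determined by the displayed edges together with whatever edges of $\Gamma_{m+2}$ lie in $E$.

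The core step is to pin down the part of $\Gamma_{m+2}$ inside $E$. Since $w(\Gamma)=6$, there are no white vertices in $\mathrm{Int}\,E$, so each component of $\Gamma_{m+2}\cap E$ meeting $\partial E$ only through terminal edges or not at all; applying Lemma~\ref{MM+2Edge} to the relevant label-$m+1$ edges (to remove crossings) and then New Disk Lemma (Lemma~\ref{NewDiskLemma}) along the arcs of $\partial E$ as in the proof of Lemma~\ref{NoMinimalChartSix}(c), I would reduce $\Gamma_{m+2}\cap E$ to its M-pseudo-chart normal form, obtaining a 3-angled disk of $\Gamma_{m+2}$ to which Lemma~\ref{Theorem3AngledDisk} or Lemma~\ref{3AngledDiskWithoutFeeler} applies. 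Combined with IO-Calculation (Section~\ref{s:IOC}) with respect to $\Gamma_{m+2}$ and $\Gamma_{m+3}$ in $E$, this should force the configuration inside $E$ to match, up to an element of an RO-family, the pattern occurring in the 4-chart of Fig.~\ref{figTrefoil}(a); in particular the subgraph $\Gamma_{m+1}$ together with the forced label-$m$ and label-$m+2$ edges around $w_2,w_3,w_4,w_5,w_6$ reproduces the chart of the 2-twist spun trefoil. One then reads off that this subchart (or its reflection, corresponding to the two RO-choices) lies in the lor-family of the 2-twist spun trefoil by modification (v) of the lor-equivalence, after at most a shift of labels (modification (iii)) and an orientation reversal (modification (iv)).

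Once the subchart is identified, the remaining task is bookkeeping: I would check that the C-moves used (C-I-M2, C-I-R2, C-I-R3, C-I-R4 moves inside $E$, together with the C-II and C-III moves used to normalise terminal edges) do not increase the complexity $(w(\Gamma),-f(\Gamma))$, so that the resulting chart $\Gamma'$ is still minimal, and that $\Gamma'$ genuinely contains the claimed subchart rather than merely a pseudo chart — i.e. the six white vertices and the connecting edges survive as honest edges of $\Gamma'$, which follows because no C-III move was applied at $w_2,\dots,w_6$ in the final configuration (unlike in Lemma~\ref{NoMinimalChartSix}(c), where the extra C-III move created a lens and a contradiction).

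\textbf{Main obstacle.} The hard part will be the case analysis for $\Gamma_{m+2}\cap E$: a priori the label-$m+2$ edges at $w_4,w_5,w_6$ (and their terminal edges) can be arranged in several ways, and only after invoking Lemma~\ref{Theorem3AngledDisk}, Lemma~\ref{LoopTwoVertices} (to rule out loops bounding white-vertex-free disks) and careful IO-Calculation does a single admissible configuration — the 2-twist spun trefoil pattern — remain. Keeping track of which RO-family representative arises, and hence whether one lands on the 2-twist spun trefoil or its reflection in the lor-family, is the delicate point; everything else is a routine application of the normal-form lemmata of Sections~\ref{s:M3M4move} and~\ref{s:ThreeAngledDisk}.
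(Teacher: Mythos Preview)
Your plan has a genuine gap: it tries to \emph{classify} the configuration of Fig.~\ref{figNoMinimalChartSix}(d) as already being, after normalisation, the $2$-twist spun trefoil pattern, when in fact the subchart displayed there is \emph{not} that pattern and must be actively transformed by C-moves. Two concrete symptoms of this misreading: the terminal edges of label $m+1$ in Fig.~\ref{figNoMinimalChartSix}(d) sit at $w_3$ and $w_4$, not at $w_3$ and $w_6$; and the edges $e_4',e_4''$ of $\Gamma_{m+2}$ with $\partial e_4'=\{w_4,w_5\}$, $\partial e_4''=\{w_4,w_6\}$ are already drawn in the figure, so there is no residual case analysis of $\Gamma_{m+2}\cap E$ to perform and no $3$-angled disk of $\Gamma_{m+2}$ to which Lemma~\ref{Theorem3AngledDisk} or Lemma~\ref{3AngledDiskWithoutFeeler} could usefully be applied. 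Once you clear $E$ down to $X$ (which you do correctly), you are simply stuck with a chart that does not yet contain a trefoil subchart.

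What the paper actually does is supply an explicit C-move sequence that your proposal never contemplates: after reducing $(\Gamma_m\cup\Gamma_{m+1}\cup\Gamma_{m+2})\cap E$ to $X$, it moves the black vertices of the two terminal edges $e^*,e_4$ near the crossing $e\cap e_4'$ by C-II moves, \emph{creates a hoop} of label $m+1$ there by a C-I-M1 move, then applies two C-III moves to merge $e^*$ and $e_4$ into that hoop; this produces a $4$-angled disk of $\Gamma_{m+1}$ on which Lemma~\ref{M4-diskLemma} allows a C-I-M4 move, followed by a C-I-M3 move on the resulting M3-disk of type $(m,m+1)$ via Lemma~\ref{M3-diskLemma}. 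Only after this chain does the trefoil subchart appear. In particular your bookkeeping claim that ``no C-III move was applied at $w_2,\dots,w_6$'' is false for the working proof --- C-III moves at $w_3$ and $w_4$ are precisely the mechanism. The missing idea is the hoop-creation trick (C-I-M1 followed by two C-III moves) that converts the pair of terminal edges into structure on which the M4- and M3-disk lemmata can act.
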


\begin{proof}
Let $\Gamma$ be a minimal chart with $w(\Gamma)=6$
containing the subchart as shown in 
Fig.~\ref{figNoMinimalChartSix}{\rm (d)}.
We use the notations in 
Fig.~\ref{figNoMinimalChartSix}{\rm (d)}, 
here $e^*,e_4$ are 
the terminal edges of label $m+1$
containing $w_3,w_4$ respectively, 
$e_1$ is a terminal edge of label $m$ 
containing $w_1$,
 $e_4',e_4''$ are 
the internal edges of label ${m+2}$ 
with $\partial e_4'=\{ w_4,w_5\}$ and 
$\partial e_4''=\{ w_4,w_6\}$, 
 $e$ is 
the internal edge of label $m$ 
oriented from $w_3$ to $w_2$, and 
$e_3'$ is 
the internal edge of label $m$ 
with $\partial e_3'=\{ w_1,w_3\}$.
Let $E$ be 
the 6-angled disk of $\Gamma_{m+1}$ 
with $w(\Gamma\cap{\rm Int}E)=0$ and 
$E\supset e^*\cup e_4$ but $E\not\supset e_1$.

Set 
$X=
(e\cup e_3'\cup e^*\cup e_4\cup e_4'\cup e_4'')
\cup\partial E$.
Since $\partial E\supset \Gamma_{m+1}$, 
and 
since Int$E$ contains no white vertex, 
each connected component of 
$Cl((\Gamma_m\cup\Gamma_{m+1}\cup\Gamma_{m+2})\cap E-X)$ 
is a simple hoop or a simple ring.
By Assumption~\ref{NoSimpleHoop} and 
Assumption~\ref{AssumptionRing},
we have that 
$(\Gamma_m\cup\Gamma_{m+1}\cup\Gamma_{m+2})\cap E=X$.

By C-II moves,
 we move the two black vertices in $e^*,e_4$
near the crossing $e\cap e_4'$.
By a C-I-M1 move,
we create a hoop of label $m+1$ near 
the crossing $e\cap e_4'$ 
(see Fig.~\ref{figMinimalChartSixProof}(a)).
Then we apply two C-III moves for the two terminal edges $e^*,e_4$,
and then we obtain the chart as shown in Fig.~\ref{figMinimalChartSixProof}(b).
By Lemma~\ref{M4-diskLemma},
we can apply a C-I-M4 move for the 4-angled disk of $\Gamma_{m+1}$ so that
we obtain the chart as shown in Fig.~\ref{figMinimalChartSixProof}(c).

Let $E'$ be the 2-angled disk of $\Gamma_m$ with
$w(\Gamma\cap{\rm Int}E')=0$
and $\partial E'\ni w_2,w_4$.
Then $E'$ is an M3-disk of type $(m,m+1)$.
Thus 
by Lemma~\ref{M3-diskLemma}
we can apply a C-I-M3 move for $E'$ and then
we obtain the chart with six white vertices 
as shown in Fig.~\ref{figMinimalChartSixProof}(d).
This chart contains a subchart in the lor-family of the 2-twist spun trefoil.
\end{proof}

\begin{figure}[ht]
\centerline{\includegraphics{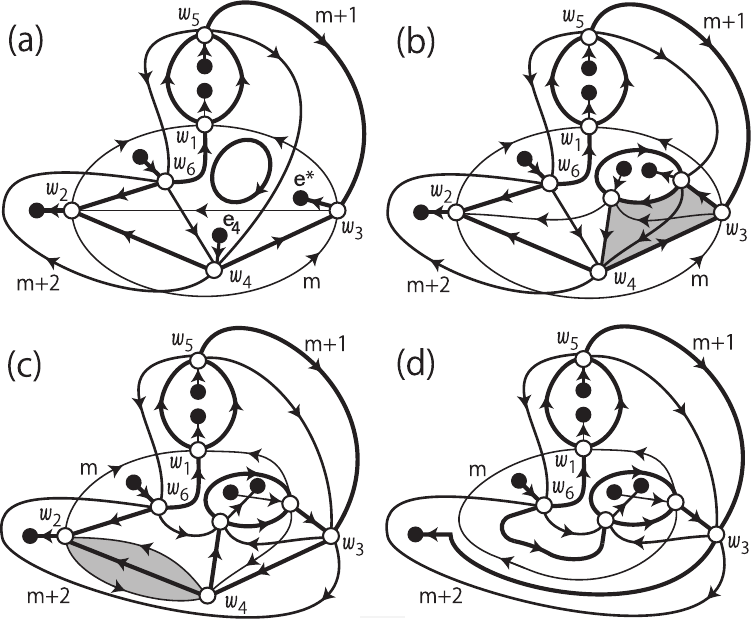}}
\vspace{5mm}
\caption{\label{figMinimalChartSixProof}
The thick lines are edges of label $m+1$.
(b) The gray region is the 4-angled disk of $\Gamma_{m+1}$.
(c) The gray region is an M3-disk.}
\end{figure}


\section{{\large Proof of Main Theorem}}
\label{s:MainTheorem}

In this section,
we assume that 
\begin{enumerate}
\item[(1)] $\Gamma$ is a minimal chart of type $(m;3,3)$.
\end{enumerate}
By Lemma~\ref{LoopSixVertices},
we can assume that 
\begin{enumerate}
\item[(2)] $\Gamma$ does not contain any loop.
\end{enumerate}

{\bf Claim 1.} The subgraph $\Gamma_m$ contains a graph $G$ as shown in Fig.~\ref{figThree}.

{\it Proof of Claim $1$.}
First of all,
we shall show $w(\Gamma_m)=3$.
Since $\Gamma$ is of type $(m;3,3)$,
we have that $w(\Gamma_m\cap\Gamma_{m+1})=3$ and 
$w(\Gamma_{m-1})=0$ 
by the conditions (i) and (ii)
of type of a chart.
Since any white vertex in $\Gamma_m$ is contained in 
$\Gamma_{m-1}$ or $\Gamma_{m+1}$,
we have $w(\Gamma_m)=3$.

Let $G$ be a connected component of $\Gamma_m$
with $w(G)\ge1$.
We shall show $w(G)=3$.
By Lemma~\ref{TwoThreeLoop}(a),
we have $w(G)\ge2$.
If $w(G)=2$,
then there exists a white vertex $w$ in $\Gamma_m$
with $w\not\in G$.
Since $w(G)=2$ and $w(\Gamma_m)=3$,
the connected component of $\Gamma_m$
containing $w$ contains exactly one white vertex.
This contradicts Lemma~\ref{TwoThreeLoop}(a).
Hence $w(G)=3$.
By Lemma~\ref{TwoThreeLoop}(b),
the graph $G$ is the graph as shown in Fig.~\ref{figThree}.
Thus $\Gamma_m$ contains the graph $G$ as shown in Fig.~\ref{figThree}.
\hfill {$\square$}\vspace{1.5em}

The closures of connected components of $S^2-G$ 
consists of three disks.
Let $e_1$ be the terminal edge of label $m$.
Let $D_1$ be one of the three disks
with $D_1\cap e_1=\emptyset$,
$D_2$ one of the three disks
with $D_2\supset e_1$,
and $D_3$ the last one of the three disks 
(see Fig.~\ref{Type33}(a)).
Then $D_1$ is a 2-angled disk without feelers,
$D_2$ is a 3-angled disk with one feeler,
and $D_3$ is a 3-angled disk without feelers.
Let $w_1,w_2,w_3$ be white vertices in $\Gamma_m$
with $w_1\in e_1$,
and $e$ the internal edge of label $m$ with $e=D_1\cap D_3$.
In this section
we assume that 
\begin{enumerate}
\item[(3)] $\Gamma$ is locally minimal with respect to $D_2$,
\item[(4)] the terminal edge $e_1$ of label $m$ is oriented from $w_1$,
\item[(5)] the edge $e$ is oriented from $w_3$ to $w_2$,
\item[(6)] the point at infinity $\infty$ is contained in Int$D_2$.
\end{enumerate}
We use the notations shown in Fig.~\ref{Type33}(a).
\vspace{1.5em}

\begin{figure}[ht]
\centerline{\includegraphics{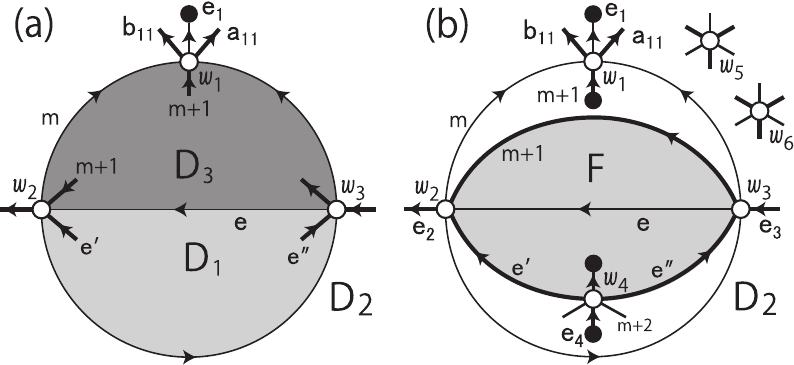}}
\vspace{5mm}
\caption{\label{Type33}
The thick lines are edges of label $m+1$.
(a) The light gray region is the disk $D_1$,
and the dark gray region is the disk $D_3$.
(b) The gray region is the disk $F$.}
\end{figure}

{\bf Claim 2.} 
$w(\Gamma\cap{\rm Int}D_1)\ge1$ and 
$w(\Gamma\cap{\rm Int}D_2)\ge1$.

{\it Proof of Claim $2$.}
Since $\partial D_1$ is oriented anticlockwise,
by Lemma~\ref{TwoAngledDisk}(a)
we have $w(\Gamma\cap{\rm Int}D_1)\ge1$.

Since $D_2$ is a special $3$-angled disk with one feeler,
by Lemma~\ref{Theorem3AngledDisk}
we have $w(\Gamma\cap{\rm Int}D_2)\ge1$.
\hfill {$\square$}\vspace{1.5em}

Let $w_4,w_5,w_6$ be white vertices in $\Gamma_{m+2}$.
By Claim $2$,
without loss of generality
we can assume that 
$w_4\in {\rm Int}D_1$ and
$w_5\in {\rm Int}D_2$. 
\vspace{1.5em}

{\bf Claim 3.}
 $w_6\not\in {\rm Int}D_1$.

{\it Proof of Claim $3$.}
Let $e',e''$ be the edges of label ${m+1}$ in $D_1$ 
containing $w_2,w_3$ respectively.
Then both of $e',e''$ are oriented inward at white vertices on the boundary.
Suppose $w_6\in {\rm Int}D_1$.
Then $w(\Gamma\cap {\rm Int}D_1)=2$
and $w(\Gamma_{m+1}\cap {\rm Int}D_1)=2$.
Thus by Lemma~\ref{TwoAngledDisk}(c),
the disk $D_1$ contains an element of RO-families 
of the two pseudo charts 
as shown in Fig.~\ref{fig2AngledDisk}(c) and (d). 
In Fig.~\ref{fig2AngledDisk}(c) and (d),
there exists an internal edge of label ${m+1}$ 
oriented outward at the white vertex on the boundary.
Hence $e'$ or $e''$ is oriented outward at the white vertex on the boundary $\partial D_1$.
This is a contradiction. 
Hence $w_6\not\in {\rm Int}D_1$.
\hfill {$\square$}\vspace{1.5em}

{\bf Claim 4.}
 $w_6\not\in {\rm Int}D_2$.

{\it Proof of Claim $4$.}
Suppose $w_6\in {\rm Int}D_2$.
Then 
$w(\Gamma\cap{\rm Int}D_1)=1$, 
$w(\Gamma\cap{\rm Int}D_2)=2$ and
$w(\Gamma\cap{\rm Int}D_3)=0$.
By Lemma~\ref{TwoAngledDisk}(b),
the disk $D_1$ contains pseudo chart as shown in 
Fig.~\ref{fig2AngledDisk}(b).
By Lemma~\ref{3AngledDiskWithoutFeeler},
the disk $D_3$ contains pseudo chart as shown in 
Fig.~\ref{fig3AngledDisk}(b).
Thus we have the pseudo chart as shown in Fig.~\ref{figIOC}(a).

Let $e_4$ be the terminal edge of label $m+1$
containing $w_4$.
Let $F$ be the 3-angled disk of $\Gamma_{m+1}$
containing $e$
with $\partial F\ni w_2,w_3,w_4$.
Then $w(\Gamma\cap{\rm Int}F)=0$. 
Now we have the same situation 
in the example of 
IO-Calculation 
in Section~\ref{s:IOC}. 
Thus 
we have $e_4\not\subset F$ 
by IO-Calculation 
with respect to $\Gamma_{m+2}$ in $F$
(see Statement (3) in Section~\ref{s:IOC}).
Hence we have the pseudo chart in 
Fig.~\ref{Type33}(b).
We use the notations shown in Fig.~\ref{Type33}(b).

Now $e_2$ is an internal edge (or a terminal edge) of label ${m+1}$ in $D_2$
containing $w_2$.
Then $e_2$ is not a terminal edge.
For,
if $e_2$ is a terminal edge,
then the 3-angled disk $F$ satisfies the condition in 
Lemma~\ref{LemmaTriangle}.
Thus $w(\Gamma\cap{\rm Int}F)\ge1$.
This contradicts the fact that 
$w(\Gamma\cap{\rm Int}F)=0$.
Hence 
 $e_2$ is not a terminal edge.

Let $F'=Cl(S^2-F)$.
Then $F'$ is a 3-angled disk of $\Gamma_{m+1}$.
We shall show $e_2\not\ni w_3$.
For, if $e_2\ni w_3$,
then the edge $e_2$ separates $F'$ into two disks.
One of the two disks contains the terminal edge $e_4$,
say $D$.
Let $D'$ be the other disk.
By IO-Calculation with respect to $\Gamma_{m+2}$ in $D$,
we have that $D$ contains $w_5$ or $w_6$.
Further,
by IO-Calculation with respect to $\Gamma_{m+1}$ in $D'$,
we have that $D'$ contains $w_5$ or $w_6$.
Without loss of generality
we can assume that $w_5\in D$ and $w_6\in D'$.
Thus $w(\Gamma_{m+2}\cap D')=1$
and there exists a connected component of $\Gamma_{m+2}$
containing exactly one white vertex $w_6$.
This contradicts Lemma~\ref{TwoThreeLoop}(a).
Hence $e_2\not\ni w_3$.
Since $e_2\not=a_{11}$ and $e_2\not=b_{11}$
and since $e_2$ is not a terminal edge, 
we have  $e_2\ni w_5$ or $e_2\ni w_6$.
Without loss of generality
we can assume
$e_2\ni w_5$.

Now $e_3$ is  an internal edge (or a terminal edge) of label ${m+1}$ in $D_2$ 
containing $w_3$.
Since the edge $e_3$ is not middle at $w_3$,
by Assumption~\ref{NoTerminal}
the edge $e_3$ is not a terminal edge.
Hence there are four cases:
(1) $e_3=a_{11}$,
(2) $e_3=b_{11}$,
(3) $e_3\ni w_5$,
(4) $e_3\ni w_6$.

For {\bf Case (1)},
there exists a lens of type $(m,m+1)$ whose boundary contains $e_3$.
This contradicts Lemma~\ref{PolandLemma}(b).
Hence Case (1) does not occur.

For {\bf Case (2)},
we have $a_{11}\ni w_6$
because $a_{11}$ is not middle at $w_1$.
Hence there exists a loop of label $m+1$ containing $w_6$.
This contradicts the fact that 
$\Gamma$ does not contain any loop
(see Statement (2) in this section).
Hence Case (2) does not occur.

For {\bf Case (3)},
let $e_5$ be an internal edge 
(or a terminal edge) of label ${m+1}$ 
containing $w_5$
different from $e_2$ and $e_3$.
Since $e_2$ is oriented from $w_2$ to $w_5$
and since $e_3$ is oriented from $w_5$ to $w_3$,
the edge $e_5$ is not middle at $w_5$.
By Assumption~\ref{NoTerminal},
the edge $e_5$ is not a terminal edge.

Let $E$ be the 4-angled disk of $\Gamma_{m+1}$
with $\partial E=e'\cup e''\cup e_2\cup e_3$
and $E\ni w_1$.
If $e_5\not\subset E$,
then $w_6\in e_5$, because $e_5$ is not a terminal edge.
Thus there exists a loop of label $m+1$
containing $w_6$.
This contradicts the fact that 
$\Gamma$ does not contain any loop.
Hence $e_5\subset E$ (see Fig.~\ref{figClaim4}(a)).

Let $e'''$ be the internal edge of label ${m+1}$
with $\partial e'''=\{w_2,w_3\}$.
Let $E'$ be the 3-angled disk of $\Gamma_{m+1}$ in $E$
with $\partial E'=e_2\cup e_3\cup e'''$.
By IO-Calculation with respect to $\Gamma_{m+1}$ 
in $E'$,
we have $w_6\in E'\subset E$.
Thus $w(\Gamma\cap(S^2-E))=0$.
However,
by IO-Calculation with respect to $\Gamma_{m+2}$ in $Cl(S^2-E)$,
there exists at least one white vertex of $\Gamma_{m+2}$ in $S^2-E$.
This is a contradiction.
Hence Case (3) does not occur.

For {\bf Case (4)},
let $e_2',e_2''$ be internal edges (or terminal edges) of label ${m+1}$ containing $w_5$ 
different from $e_2$ such that 
$e_2,e_2',e_2''$ lie clockwise around $w_5$ in this order.
Let $e_3',e_3''$ be internal edges (or terminal edges) of label ${m+1}$ 
containing $w_6$
different from $e_3$ such that 
$e_3,e_3',e_3''$ lie clockwise around $w_6$ in this order
(see Fig.~\ref{figClaim4}(b)).

We can show that $a_{11}=e_3'$.
If not,
then there are three cases:
(4-1) $a_{11}=e_2'$, 
(4-2) $a_{11}=e_2''$, 
(4-3) $a_{11}=e_3''$.

For {\bf Case (4-1)},
there exists a loop of label $m+1$ containing $w_6$.
This contradicts the fact that 
$\Gamma$ does not contain any loop.

For {\bf Case (4-2)},
the edge $b_{11}$ is a terminal edge not middle at $w_1$.
This contradicts Assumption~\ref{NoTerminal}.

For {\bf Case (4-3)}, 
the edge $e_{3}'$ is a terminal edge not middle at $w_6$.
This contradicts Assumption~\ref{NoTerminal}.

Hence we have $a_{11}=e_3'$.
Similarly we can show that $b_{11}=e_2''$.
Since the edge $e_3''$ is not middle at $w_6$,
we have $e_3''=e_2'$.
Hence by the help of New Disk Lemma (Lemma~\ref{NewDiskLemma}),
the chart $\Gamma$ contains the subchart as shown in 
Fig.~\ref{figNoMinimalChartSix}(c).
By Lemma~\ref{NoMinimalChartSix},
the chart $\Gamma$ is minimal.
This is a contradiction.
Hence Case (4) does not occur.
Therefore we complete the proof of Claim~$4$.
\hfill {$\square$}\vspace{1.5em}

\begin{figure}[htb]
\centerline{\includegraphics{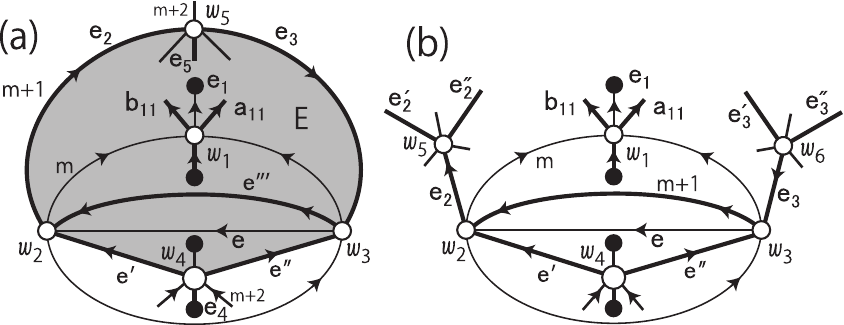}}
\vspace{5mm}
\caption{\label{figClaim4}
The thick lines are edges of label $m+1$.
(a) The gray region is the disk $E$.}
\end{figure}


{\it Proof of Main Theorem.}
Now we start from the pseudo chart as shown in Fig.~\ref{Type33}(a).
By Claim~$2$, Claim~$3$ and Claim~$4$,
we have
 $w(\Gamma\cap{\rm Int}D_1)=1$,
$w(\Gamma\cap{\rm Int}D_2)=1$ and
$w(\Gamma\cap{\rm Int}D_3)=1$.
Without loss of generality,
we can assume that 
Int$D_1$ contains a white vertex $w_4$,
Int$D_2$ contains a white vertex $w_5$,
Int$D_3$ contains a white vertex $w_6$.

By Lemma~\ref{TwoAngledDisk}(b),
the disk $D_1$ contains the pseudo chart as shown in 
Fig.~\ref{fig2AngledDisk}(b).
Since $D_2$ is a special 3-angled disk with one feeler,
by Lemma~\ref{Theorem3AngledDisk}
the disk $D_2$ contains
an element of RO-families of the two pseudo charts
as shown in Fig.~\ref{fig3AngledDisk}(g) and (h)
(see Fig.~\ref{figMainTheorem}(a) and (b)).

\begin{figure}[htb]
\centerline{\includegraphics{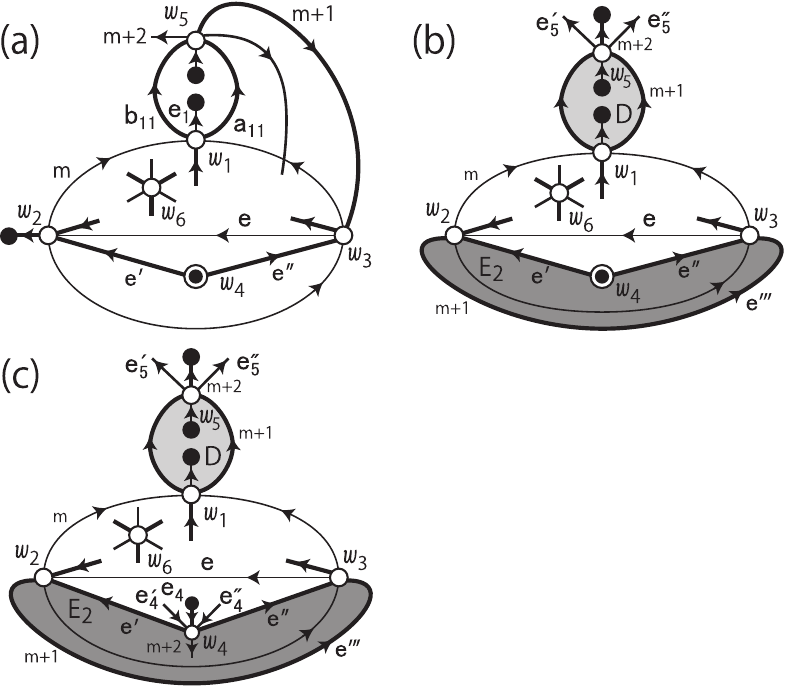}}
\vspace{5mm}
\caption{\label{figMainTheorem}
The thick lines are edges of label $m+1$.
(b),(c) The dark gray region is the 3-angled disk $E_2$,
and the light gray region is the 2-angled disk $D$.}
\end{figure}

Suppose that $\Gamma$ contains the pseudo chart as shown in Fig.~\ref{figMainTheorem}(b).
We use the notations shown in Fig.~\ref{figMainTheorem}(b),
here $e$ is the internal edge of label $m$ 
oriented from $w_3$ to $w_2$, and
$e',e'',e'''$ are internal edges of label ${m+1}$
with $\partial e'=\{w_2,w_4\}$,
$\partial e''=\{w_3,w_4\}$ and
$\partial e'''=\{w_2,w_3\}$.
Let $E_1,E_2$ be the 3-angled disks of $\Gamma_{m+1}$
with $E_1\cap E_2=\partial E_1=\partial E_2=e'\cup e''\cup e'''$
and $E_1\supset e$.
Let $e_4$ be the terminal edge of label $m+1$ containing $w_4$.
By IO-Calculation with respect to $\Gamma_{m+2}$ in $E_2$,
we have $e_4\not\subset E_2$.
Thus $e_4\subset E_1$ (see Fig.~\ref{figMainTheorem}(c)).

Let $D$ be the 2-angled disk of $\Gamma_{m+1}$ in $D_2$
with $\partial D\ni w_1,w_5$ 
(see Fig.~\ref{figMainTheorem}(c)).
Let $n_I$ be the number of inward arcs of label $m+2$ in $Cl(E_1-D)$ and
$n_O$ the number of outward arcs of label $m+2$ in $Cl(E_1-D)$.
We shall count the numbers $n_I$ and $n_O$ as follows.

{\bf Assertion 1.} There are two outward arcs of label $m+2$ 
containing $w_5$ in $Cl(E_1-D)$.

For, the two edges $e_5',e_5''$ are internal edges (or terminal edges) of label ${m+2}$ containing $w_5$ in $Cl(E_1-D)$ 
(see Fig.~\ref{figMainTheorem}(c)).
Since the two edges are oriented outward at $w_5$,
there are two outward arcs of label $m+2$ 
containing $w_5$ in $Cl(E_1-D)$.

{\bf Assertion 2.} There are two inward arcs of label $m+2$ containing $w_4$ in $Cl(E_1-D)$.

For, $e_4\subset E_1$ implies that
 there are two internal edges (or terminal edges)
 of label ${m+2}$ in $Cl(E_1-D)$  
containing $w_4$, say
$e_4',e_4''$ (see Fig.~\ref{figMainTheorem}(c)).
Since the terminal edge $e_4$ is oriented inward at $w_4$,
the two edges  $e_4',e_4''$ are oriented inward at $w_4$.
Thus there are two inward arcs of label $m+2$ 
containing $w_4$ in $Cl(E_1-D)$.

Now for the white vertex $w_6\in D_3\subset Cl(E_1-D)$,
there are two cases:
\begin{enumerate}
\item[(b-1)] there is one inward arc of label $m+2$ 
containing $w_6$
and there are two outward arcs of label $m+2$ 
containing $w_6$,  
\item[(b-2)] there is one outward arc of label $m+2$ 
containing $w_6$
and there are two inward arcs of label $m+2$ 
containing $w_6$.
\end{enumerate}

For {\bf Case (b-1)},
by Assumption~\ref{NoTerminal}
the white vertex $w_6$ is contained in at most one terminal edge of label $m+2$.
Let $e_6$ be an internal edge (or a terminal edge) of label ${m+2}$ middle at $w_6$.

Suppose that $e_6$ is a terminal edge.
Then by Condition (b-1)
the terminal edge $e_6$ is oriented inward at $w_6$.
Thus 
\begin{enumerate}
\item[(*)] there exists an outward arc of label $m+2$ containing the black vertex in the terminal edge $e_6$.
\end{enumerate}
Since none of $e_4',e_4'',e_5',e_5''$ contain middle arcs at $w_4$ or $w_5$,
none of $e_4',e_4'',e_5',e_5''$ are terminal edges
by Assumption~\ref{NoTerminal}.
Thus the two edges $e_4',e_4''$ contain $w_5$ or $w_6$,
and the two edges $e_5',e_5''$ contain $w_4$ or $w_6$.
Hence
by (*) and Assertion~$1$ and Assertion~$2$
we have $n_I=3$ and $n_O=5$.
This is a contradiction
by IO-Calculation with respect to $\Gamma_{m+2}$ in $Cl(E_1-D)$.

Similarly
for the case that $e_6$ is not a terminal edge,
we have $n_I=3$ and $n_O=4$.
This is a contradiction.
Hence Case (b-1) does not occur.

For {\bf Case (b-2)},
in a similar way to Case (b-1), 
we have  $n_I=4$ or $5$ and $n_O=3$.
This is a contradiction by IO-Calculation with respect to $\Gamma_{m+2}$ in $Cl(E_1-D)$.
Hence Case (b-2) does not occur.
Thus $\Gamma$ does not contain the pseudo chart as shown in Fig.~\ref{figMainTheorem}(b).

Suppose that $\Gamma$ contains the pseudo chart as shown in Fig.~\ref{figMainTheorem}(a).
Since the disk $D_3$ is a 3-angled disk without feelers 
with $w(\Gamma\cap{\rm Int}D_3)=1$,
by Lemma~\ref{Theorem3AngledDisk}
there are three cases:
\begin{enumerate}
\item[(a-1)] $D_3$ contains an element of the RO-family of the pseudo chart as shown in Fig.~\ref{fig3AngledDisk}(c)
(see Fig.~\ref{figMainTheoremCaseA}(a)),
\item[(a-2)] $D_3$ contains an element of RO-family of the pseudo chart as shown in Fig.~\ref{fig3AngledDisk}(d)
(see Fig.~\ref{figMainTheoremCaseA}(b)),
\item[(a-3)] $D_3$ contains an element of RO-family of the pseudo chart as shown in Fig.~\ref{fig3AngledDisk}(e)
(see Fig.~\ref{figMainTheoremCaseA}(c)).
\end{enumerate}
Let $e_4$ be the terminal edge of label $m+1$ containing $w_4$.

For {\bf Case (a-1)},
let $E$ be the 4-angled disk of $\Gamma_{m+1}$ with 
$\partial E\ni w_2,w_3,w_4,w_6$ and $E\supset e$.
By IO-Calculation with respect to $\Gamma_{m+2}$ in $E$,
we have $e_4\not\subset E$.
Thus by the help of New Disk Lemma (Lemma~\ref{NewDiskLemma}),
the chart $\Gamma$ contains the subchart as shown in 
Fig.~\ref{figNoMinimalChartSix}(a).
By Lemma~\ref{NoMinimalChartSix},
the chart $\Gamma$ is not minimal.
This is a contradiction.
Hence Case (a-1) does not occur.

For {\bf Case (a-2)},
let $E$ be the 6-angled disk of $\Gamma_{m+1}$ with 
$E\not\supset e_1$ and $E\supset e$.
By IO-Calculation with respect to $\Gamma_{m+2}$ in $E$,
we have $e_4\subset E$.
Thus by the help of New Disk Lemma (Lemma~\ref{NewDiskLemma}),
the chart $\Gamma$ contains the subchart as shown in 
Fig.~\ref{figNoMinimalChartSix}(d).
By Lemma~\ref{MinimalChartSix},
the chart $\Gamma$ is C-move equivalent to a minimal chart containing a subchart in the lor-family of the 2-twist spun trefoil as shown in Fig.~\ref{figTrefoil}.

For {\bf Case (a-3)},
let $E$ be the 6-angled disk of $\Gamma_{m+1}$ with 
$E\not\supset e_1$ and $E\supset e$.
By IO-Calculation with respect to $\Gamma_{m+2}$ in $E$,
we have $e_4\not\subset E$.
Thus by the help of New Disk Lemma (Lemma~\ref{NewDiskLemma}),
the chart $\Gamma$ contains the subchart as shown in 
Fig.~\ref{figNoMinimalChartSix}(b).
By Lemma~\ref{NoMinimalChartSix},
the chart $\Gamma$ is not minimal.
This is a contradiction.
Hence Case (a-3) does not occur.

Therefore we complete the proof of Main Theorem.
\hfill {$\square$}\vspace{1.5em}

\begin{figure}[htb]
\centerline{\includegraphics{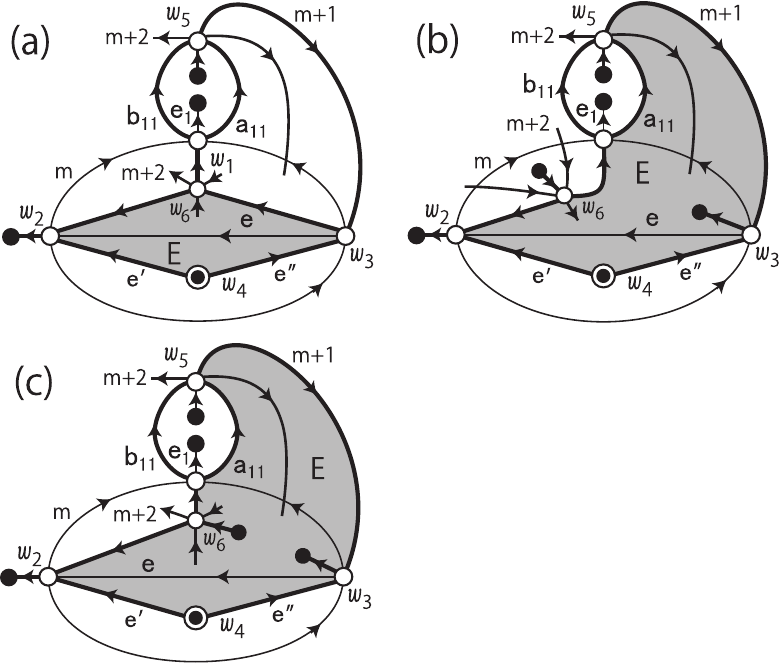}}
\vspace{5mm}
\caption{\label{figMainTheoremCaseA}
The thick lines are edges of label $m+1$
and the gray region is the disk $E$.}
\end{figure}


\vspace{5mm}

\begin{minipage}{65mm}
{Teruo NAGASE
\\
{\small Tokai University \\
4-1-1 Kitakaname, Hiratuka \\
Kanagawa, 259-1292 Japan\\
\\
nagase@keyaki.cc.u-tokai.ac.jp
}}
\end{minipage}
\begin{minipage}{65mm}
{Akiko SHIMA 
\\
{\small Department of Mathematics, 
\\
Tokai University
\\
4-1-1 Kitakaname, Hiratuka \\
Kanagawa, 259-1292 Japan\\
shima@keyaki.cc.u-tokai.ac.jp
}}
\end{minipage}

\end{document}